\documentclass[letterpaper, 10 pt, conference]{ieeeconf}  

\IEEEoverridecommandlockouts                              
\usepackage[utf8]{inputenc}
\usepackage[T1]{fontenc}

\usepackage{amsmath,amssymb,amsfonts}

\usepackage{algorithmic}
\usepackage{graphicx}
\usepackage{array}
\usepackage{booktabs}
\usepackage{siunitx}

\usepackage[table]{xcolor}

\usepackage{tikz}
\usepackage{pgfplots}
\usepackage{tikz-3dplot}

\pgfplotsset{compat=1.18}
\usepgfplotslibrary{colormaps}
\usetikzlibrary{arrows.meta,calc,positioning}

\usepackage[font=small,labelfont=bf]{caption}
\usepackage{subcaption}
\usepackage{cite}

\usepackage{pifont}

\newcommand{\cmark}{%
    \textcolor{green!60!black}{\ding{51}}%
}

\newcommand{\xmark}{%
    \textcolor{red}{\ding{55}}%
}

\definecolor{headergray}{gray}{0.9}
\definecolor{ssmagreen}{RGB}{220,245,220}
\definecolor{SFRrow}{HTML}{E67E22}
\definecolor{FRrow}{HTML}{2E86C1}

\newtheorem{lemma}{\textbf{Lemma}}

\newtheorem{definition}{\textbf{Definition}}
\newtheorem{proposition}{\textbf{Proposition}}

\usepackage[
    colorlinks=true,
    linkcolor=purple,
    citecolor=blue,
    urlcolor=green
]{hyperref}

\AtBeginDocument{%
    \hypersetup{
    colorlinks=true,
    linkcolor=purple,
    citecolor=blue,
    urlcolor=green
    }%
}

\title{\LARGE \bf
A Fast and High-Accuracy Finite-Time Sliding-Mode Algorithm for Computing Local Stackelberg Equilibria in Nonlinear Bilevel Games*
}

\author{Abdelhafid Zenati$^{1,2}$,~ Dave D Muir$^{1}$,~Kamal Youcef-Toumi$^{2}$
\thanks{*This work was not supported by any organization}
\thanks{$^{1}$ Engineering Department, School of Science and Technology (SST), City St George's University of London, Northampton Square,  
		London, EC1V 0HB, United Kingdom.
        {\tt\small  (e-mail: \{Abdelhafid.Zenati, Dave.Muir\}@city.ac.uk).}}%
\thanks{$^{2}$ Department of Mechanical Engineering, Massachusetts Insti
tute of Technology, Cambridge, Massachusetts 02139, United
States. {\tt\small (e-mail: \{youcef, abdelzen\}@mit.edu)}
        }%
}

\begin{document}

\maketitle
\thispagestyle{empty}
\pagestyle{empty}

\begin{abstract}
Computing local Stackelberg equilibria in nonlinear sequential games is challenging because gradient-based best-response methods may diverge, cycle, or converge to stationary points that do not satisfy the required equilibrium conditions. These failures can result from curvature asymmetry, unstable response mappings, and lack of invariance with respect to the follower's best-response manifold. This paper introduces the Stackelberg Sliding-Mode Algorithm (SSMA), a control-theoretic framework that recasts Stackelberg equilibrium computation as a manifold-stabilization problem. The follower and leader first-order optimality conditions define Stackelberg sliding manifolds, and sliding-mode dynamics drive the iterates to these manifolds in finite time. The follower curvature and leader reduced-curvature conditions determine manifold attractivity, making valid local Stackelberg equilibria attractive while rejecting stationary points that fail the second-order equilibrium conditions. The resulting approach integrates finite-time manifold stabilization, Stackelberg-consistent dynamics,
and curvature-based equilibrium selection within a unified framework. Across 500 nonconvex Monte Carlo trials, SSMA achieved 100\% success, reached the prescribed tolerance in a median of 26 iterations, compared with 456 for the nearest fully successful baselines, and attained terminal residuals near \(10^{-16}\), while converging to a valid local Stackelberg equilibrium.
\end{abstract}

\section{INTRODUCTION}

Stackelberg games provide a foundational framework for modelling hierarchical decision-making in economics, security, biology,  energy systems, and engineering \cite{cdc1,cdc19,cdc18}. The defining feature is the leader’s ability to anticipate the follower’s reaction and optimise accordingly. This strategic asymmetry has been widely exploited in applications ranging from security and allocation problems to cyber-physical systems and energy markets \cite{cdc2,cdc3}. Recent studies highlight the importance of accurate equilibrium computation in these settings, particularly when the interaction between decision makers involves nonconvex objectives, implicit constraints, and potentially unstable responses \cite{cdc4, cdc5}.

Despite this breadth of application, computing Stackelberg equilibria remains difficult, especially in nonlinear or nonconvex settings \cite{cdc5,cdc17}. Classical reformulation approaches treat the follower’s optimality conditions as complementarity constraints embedded in the leader’s problem. These methods can be effective in structured environments, such as linear–quadratic or convex models, but they often rely on strong assumptions: uniqueness of the follower’s equilibrium, specific equilibrium selection rules, or restrictive curvature conditions. When these assumptions fail, reformulation-based algorithms can converge to poor solutions or become computationally intractable, especially when the feasible reaction set is complex or multi-valued \cite{cdc6}. Further, the computational hardness of robust or constrained variants of Stackelberg equilibria has been demonstrated in several recent studies, revealing the sensitivity of leader strategies to small deviations in follower behaviour \cite{cdc7}.

The limitations of gradient-based best-response dynamics have also been
extensively documented: these methods may diverge, produce oscillations,
or converge to stationary points that do not correspond to valid
Stackelberg equilibria \cite{cdc8}. The underlying instability is linked
to curvature asymmetry, ill-conditioned response mappings, and the
general lack of invariance of gradient iterates to the true best-response
manifold \cite{cdc9}. First-order schemes, including gradient-based,
optimistic-gradient (OGDA) and nested best-response methods (BR-GD), seek to improve the underlying descent dynamics but may remain sensitive to the geometry and conditioning of the follower response \cite{liu2022bome}. More structured approaches, such as Follow-the-Ridge (FR), incorporate the local response geometry to track the follower solution manifold, while curvature-aware variants such as HessianFR further exploit second-order information \cite{bao2023finding}. Attempts to
circumvent these issues using differentiation through equilibria and
implicit-function-theorem (IFT) methods provide direct estimates of the leader gradient through the follower response, but can require solving or regularizing Hessian systems and may become sensitive to poor conditioning \cite{alesiani2023implicit}. Higher-order approaches, including  cubic-regularized Newton schemes (CRN), further exploit curvature
to improve local equilibrium computation \cite{bi2026second}. More generally,
differentiation through equilibria, implicit-function methods, and
end-to-end learning offer greater flexibility but still depend on
uniqueness assumptions or high-quality equilibrium-selection oracles,
which are rarely available in general nonconvex settings
\cite{cdc10, cdc11}.

Addressing these challenges requires a framework that directly stabilises
the conditions defining a local Stackelberg equilibrium. Motivated by this
need, we propose a control-theoretic perspective grounded in sliding-mode
concepts \cite{cdc12}. A local Stackelberg equilibrium is characterised by
two first-order conditions, Follower Stationarity and Leader Stationarity,
together with two second-order conditions, Follower Curvature and Leader
Reduced Curvature, see Definition \ref{def1}. Here, Leader Stationarity is defined by the total
derivative of the reduced leader objective along the follower's response,
rather than by the partial gradient with respect to the leader variable.
We formulate the two stationarity conditions as Stackelberg sliding
manifolds, whose joint zero set defines the first-order Stackelberg critical
set. The update law is then designed so that these manifolds become
attractive only when the corresponding Follower Curvature and Leader Reduced
Curvature conditions hold. In this way, the dynamics are driven towards
points satisfying both the first- and second-order requirements of a local
Stackelberg equilibrium, while non-equilibrium stationary points are
excluded. This sliding-mode formulation provides fast finite-time reaching
of the Stackelberg manifolds together with robust invariance once they are
reached. Numerical results on representative bilevel problems demonstrate that SSMA does not merely reach a stationary point; it rapidly and reliably converges to a valid local Stackelberg equilibrium with near-machine-precision accuracy.

 The remainder of the paper is organised as follows. Section~\ref{sec2} introduces the general-sum Stackelberg game formulation.  The proposed SSMA (SSMA) presented in Section~\ref{sec3}. Section~\ref{sec4} provides its convergence analysis. Section~\ref{sec5} reports validation and numerical experiments. The paper is concluded in Section~\ref{sec6}.
\section{Problem Setting}\label{sec2}

We consider a general-sum Stackelberg game involving two strategic agents: a leader with decision variable \( x \in \mathbb{R}^n \) and a follower with decision variable \( y \in \mathbb{R}^m \). 
The leader chooses an action first, after which the follower observes \( x \) and selects \( y \) in response.
The leader aims to minimise a cost function \( f(x,y) \) while the follower minimises \( g(x,y) \).
This hierarchical interaction induces the bilevel optimisation problem (see \cite{cdc20,cdc21} for instance)
\[
\min_{x \in \mathbb{R}^n} \left\{ f(x,y) \;\middle|\; y \in \arg\min_{y \in \mathbb{R}^m} g(x,y) \right\}.
\]
The follower’s optimal reaction set is denoted by
\[
R(x) := \arg\min_{y} g(x,y).
\]
\begin{definition}[\color{blue} Stackelberg Equilibrium]
A pair \( (x^*, y^*) \) is a global Stackelberg equilibrium if
\[
y^* \in R(x^*),~ \text{and} ~
f(x^*, y^*) \le \max_{y \in R(x)} f(x,y), \;\; \forall x \in \mathbb{R}^n.
\]
\end{definition}

Because global equilibria are difficult to obtain in nonlinear settings, we also adopt a local notion based on neighbourhood behaviour and the implicit follower response.

\begin{definition}[\color{blue}Local Stackelberg Equilibrium] \label{def1}
A pair \( (x^*, y^*) \) is a local Stackelberg equilibrium if:
\begin{enumerate}
    \item \( y^* \) is a local minimiser of \( g(x^*, \cdot) \);
    \item There exists an implicit function \( r(x) \) satisfying 
    \( \nabla_y g(x, r(x)) = 0 \) locally with \( r(x^*) = y^* \),
    and \( x^* \) is a local minimiser of the reduced function
    \[
    \phi(x) := f\big(x, r(x)\big).
    \]
\end{enumerate}
\end{definition}
Importantly, local Stackelberg stationarity is not joint stationarity.
Since the leader anticipates the follower's response \(r(x)\), Leader
Stationarity is defined by the total derivative of the reduced objective
\(\phi(x)=f(x,r(x))\), rather than by \(\nabla_x f(x,y)\) alone.
Thus, the first-order conditions are Follower Stationarity
\(\nabla_y g=0\) and Leader Stationarity \(\nabla\phi=0\).

We use the standard block-Hessian notation:
\[
\nabla^2 f(x,y)=
\begin{bmatrix}
H_{xx} & H_{xy}\\
H_{yx} & H_{yy}
\end{bmatrix},
~
\nabla^2 g(x,y)=
\begin{bmatrix}
G_{xx} & G_{xy}\\
G_{yx} & G_{yy}
\end{bmatrix}.
\]

\begin{proposition}[\color{blue}Necessary Conditions] \label{cond1}
If \( (x^*, y^*) \) is a local Stackelberg equilibrium, then:
\begin{enumerate}
    \item Follower stationarity:
        \(
    \nabla_y g(x^*, y^*) = 0;
    \)
    \item Leader stationarity on the reaction manifold:
    \[
    \nabla_x f(x^*,y^*) 
    - G_{xy}G_{yy}^{-1}\nabla_y f(x^*,y^*) = 0;
    \]
    \item Follower curvature:
    \(
    \nabla_{yy}^2 g(x^*,y^*) \succeq 0;
    \)
    \item Leader reduced curvature:
\(H_{xx} - H_{xy}G_{yy}^{-1}G_{yx}
    - \nabla_x (G_{xy}G_{yy}^{-1} \nabla_y f)
    + \nabla_y (G_{xy}G_{yy}^{-1}\nabla_y f)\, G_{yy}^{-1}G_{yx}
 \succeq 0.\)
\end{enumerate}
\end{proposition}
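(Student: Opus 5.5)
The plan is to reduce every item to a standard first- and second-order optimality condition, applied either to the follower problem $y\mapsto g(x^*,y)$ or to the reduced leader problem $\phi(x)=f(x,r(x))$. Throughout I assume, as the statement implicitly does by writing $G_{yy}^{-1}$, that $f,g$ are $C^2$ (indeed $C^3$ in $g$ for item 4) near $(x^*,y^*)$ and that $G_{yy}(x^*,y^*)$ is nonsingular. This nonsingularity is what makes the implicit function $r$ of Definition~\ref{def1} well defined and $C^1$ (resp.\ $C^2$).

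\textbf{Follower items 1 and 3.} Item 1 of Definition~\ref{def1} says $y^*$ is a local minimiser of the smooth function $g(x^*,\cdot)$. The unconstrained first-order condition then gives $\nabla_y g(x^*,y^*)=0$. The second-order necessary condition gives $\nabla^2_{yy}g(x^*,y^*)\succeq 0$; combined with invertibility this is in fact $G_{yy}\succ 0$.

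\textbf{Leader item 2.} Differentiate the identity $\nabla_y g(x,r(x))\equiv 0$ to get
\[
G_{yx}+G_{yy}\,Dr(x)=0, \qquad\text{so}\qquad Dr(x)=-G_{yy}^{-1}G_{yx}.
\]
The chain rule then gives
\[
\nabla\phi(x)=\nabla_x f+Dr(x)^\top\nabla_y f=\nabla_x f-G_{xy}G_{yy}^{-1}\nabla_y f,
\]
evaluated at $(x,r(x))$, using symmetry $G_{yx}^\top=G_{xy}$ and $G_{yy}^\top=G_{yy}$. Since $x^*$ is a local minimiser of $\phi$, we have $\nabla\phi(x^*)=0$, which is exactly item 2.

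\textbf{Leader item 4.} Since $x^*$ is a local minimiser, $\nabla^2\phi(x^*)\succeq 0$. I compute $\nabla^2\phi$ by differentiating the expression $\Psi(x,y):=\nabla_x f(x,y)-G_{xy}G_{yy}^{-1}\nabla_y f(x,y)$ along $y=r(x)$:
\[
\nabla^2\phi=\nabla_x\Psi+\nabla_y\Psi\,Dr
=\nabla_x\Psi-\nabla_y\Psi\,G_{yy}^{-1}G_{yx}.
\]
Expanding $\Psi$ gives $\nabla_x\Psi=H_{xx}-\nabla_x(G_{xy}G_{yy}^{-1}\nabla_y f)$ and $\nabla_y\Psi=H_{xy}-\nabla_y(G_{xy}G_{yy}^{-1}\nabla_y f)$. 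Substituting yields
\[
H_{xx}-H_{xy}G_{yy}^{-1}G_{yx}-\nabla_x(G_{xy}G_{yy}^{-1}\nabla_y f)+\nabla_y(G_{xy}G_{yy}^{-1}\nabla_y f)\,G_{yy}^{-1}G_{yx},
\]
which matches item 4.

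\textbf{Main obstacle.} The expected difficulty is item 4, and it comes in two parts.

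The first is interpreting the partial Jacobians $\nabla_x(\cdot)$ and $\nabla_y(\cdot)$ of the matrix-vector term $G_{xy}G_{yy}^{-1}\nabla_y f$. These involve third derivatives of $g$, so $g\in C^3$ is needed. They must also be read with a consistent row/column convention, so that $\nabla_y\Psi$ is $n\times m$ and right-multiplication by $G_{yy}^{-1}G_{yx}$ is valid.

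The second is that the resulting matrix need not be symmetric as written. It equals $\nabla^2\phi$, which is symmetric, so positive semidefiniteness is meaningful either way; I would note this explicitly, or read $\succeq 0$ through the quadratic form.

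I would also remark that the positivity argument uses only the second-order necessary condition for unconstrained minima applied to the $C^2$ function $\phi$.
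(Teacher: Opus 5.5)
The paper states this proposition without proof, so there is nothing of the paper's to compare against. Your derivation is the standard one and it is correct. It applies the first- and second-order conditions to $g(x^*,\cdot)$ and to $\phi$, and gets $Dr=-G_{yy}^{-1}G_{yx}$ from the implicit function theorem, matching the formula $r'(x)=-g_{yy}^{-1}g_{yx}$ the paper invokes in Section~V.

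The hypotheses you add are genuinely required: $f\in C^2$, $g\in C^3$, and $G_{yy}(x^*,y^*)$ nonsingular. The statement is not well posed without the last, and item~4 involves third derivatives of $g$. So adding them repairs the statement rather than leaving a gap in your argument.

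Two points of precision.

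\textbf{Continuity of $r$.} Definition~\ref{def1} only asserts the existence of \emph{some} $r$ with $\nabla_y g(x,r(x))=0$ and $r(x^*)=y^*$. You should state that $r$ is continuous at $x^*$. Then, by the local-uniqueness part of the implicit function theorem, it coincides near $x^*$ with the smooth branch you differentiate. Without continuity, $r$ could jump to another nondegenerate critical branch on one side of $x^*$. In that case $x^*$ can be a local minimiser of $\phi$ while item~2 fails: take $g=(y^2-1)^2/4$, $y^*=1$, $r=-1$ for $x>0$, and $f=-x+c(1-y)$ with $c>0$.

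\textbf{Symmetry.} Your concern is unnecessary at the point in question. Since $\nabla\phi(x)=\Psi(x,r(x))$ holds identically near $x^*$, the matrix in item~4 evaluated at $(x^*,y^*)$ is exactly $\nabla^2\phi(x^*)$. It is therefore symmetric, even though its individual summands are not.
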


\begin{proposition}[\color{blue}Sufficient Conditions] \label{cond2}
A point \( (x,y) \) is a local Stackelberg equilibrium if:
\begin{enumerate}
    \item \( \nabla_y g(x,y) = 0; \)
    \item \( \nabla_x f(x,y) - G_{xy}G_{yy}^{-1}\nabla_y f(x,y) = 0; \)
    \item \( \nabla_{yy}^2 g(x,y) \succ 0; \)
    \item \(H_{xx} - H_{xy}G_{yy}^{-1}G_{yx}
    - \nabla_x (G_{xy}G_{yy}^{-1} \nabla_y f)\\
    + \nabla_y (G_{xy}G_{yy}^{-1}\nabla_y f)\, G_{yy}^{-1}G_{yx}
    \succ 0.\)
    
\end{enumerate}
\end{proposition}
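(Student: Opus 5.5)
The plan is to verify the two clauses of Definition~\ref{def1} directly from conditions 1--4, using the implicit function theorem to build the response map $r$ and a second-order Taylor argument for the reduced objective $\phi$. Throughout we assume $f,g\in C^2$ near $(x,y)$, with $g$ of class $C^3$ where $\nabla r$ must be differentiated. This is implicit in the appearance of $\nabla_x(G_{xy}G_{yy}^{-1}\nabla_y f)$ in condition~4.

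\emph{Follower clause.} Conditions 1 and 3 say that $\nabla_y g(x,\cdot)$ vanishes at $y$ and $\nabla^2_{yy}g(x,y)\succ 0$. The standard second-order sufficient condition for unconstrained minimisation then shows that $y$ is a strict local minimiser of $g(x,\cdot)$.

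\emph{Construction of $r$.} Consider $F(\xi,\eta):=\nabla_y g(\xi,\eta)$. Condition~3 gives $\partial_\eta F = G_{yy}\succ 0$, so this block is invertible. The implicit function theorem then yields a neighbourhood $U$ of $x$ and a $C^1$ (indeed $C^2$) map $r:U\to\mathbb{R}^m$ with $r(x)=y$ and $\nabla_y g(\xi,r(\xi))=0$ on $U$. Its Jacobian is $\nabla r(\xi)^\top = -G_{yy}^{-1}G_{yx}$, evaluated at $(\xi,r(\xi))$. By continuity of $G_{yy}$, shrinking $U$ keeps $G_{yy}(\xi,r(\xi))\succ 0$. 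This has two consequences: $r(\xi)$ remains a local minimiser of $g(\xi,\cdot)$, so $r$ is a genuine local best response, and the formula for $\nabla r$ is valid on all of $U$.

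\emph{Gradient of $\phi$.} By the chain rule,
\[
\nabla\phi(\xi)=\nabla_x f-G_{xy}G_{yy}^{-1}\nabla_y f,
\]
evaluated at $(\xi,r(\xi))$. At $\xi=x$ this equals the expression in condition~2, so $\nabla\phi(x)=0$.

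\emph{Hessian of $\phi$.} Write $\nabla\phi(\xi)=\Psi(\xi,r(\xi))$ with $\Psi(\xi,\eta):=\nabla_x f-G_{xy}G_{yy}^{-1}\nabla_y f$. Differentiating once more by the chain rule gives
\[
\nabla^2\phi=\nabla_x\Psi+\nabla_y\Psi\,\nabla r^\top
=\nabla_x\Psi-\nabla_y\Psi\,G_{yy}^{-1}G_{yx}.
\]
Expanding $\nabla_x\Psi$ produces $H_{xx}-\nabla_x(G_{xy}G_{yy}^{-1}\nabla_y f)$. Expanding $-\nabla_y\Psi\,G_{yy}^{-1}G_{yx}$ produces $-H_{xy}G_{yy}^{-1}G_{yx}+\nabla_y(G_{xy}G_{yy}^{-1}\nabla_y f)\,G_{yy}^{-1}G_{yx}$. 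The sum is exactly the matrix in condition~4, so condition~4 gives $\nabla^2\phi(x)\succ0$. With $\nabla\phi(x)=0$, the second-order sufficient condition shows that $x$ is a strict local minimiser of $\phi$, which completes the second clause of Definition~\ref{def1}.

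\emph{Main obstacle.} The delicate step is the Hessian bookkeeping: one must check that the paper's notation for the last two terms of condition~4 corresponds exactly to $\nabla_x\Psi$ and $\nabla_y\Psi\nabla r^\top$, and that the matrix is symmetric so that ``$\succ0$'' is meaningful. Here $\nabla^2\phi$ is symmetric by construction, and the identity above shows that the condition-4 matrix coincides with it. I would first fix an index convention for $\nabla_x$ and $\nabla_y$ acting on vector fields, where the row index is the component of $\Psi$ and the column index is the differentiation variable. I would then verify the identity term by term, treating the third-derivative terms of $g$ that hide inside $\nabla(G_{xy}G_{yy}^{-1})$ as absorbed into those two operator expressions, so they need not be expanded.
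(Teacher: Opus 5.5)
Your proof is correct. The paper states this proposition, like the preceding necessary conditions, without proof, so there is no competing route to compare against; your argument supplies exactly the verification the statement tacitly relies on.

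Every step checks out:
\begin{enumerate}
\item Conditions 1 and 3 give a strict local minimiser of $g(x,\cdot)$.
\item Invertibility of $G_{yy}$ gives the response map $r$ with $Dr=-G_{yy}^{-1}G_{yx}$.
\item The chain rule gives $\nabla\phi=\nabla_x f-G_{xy}G_{yy}^{-1}\nabla_y f$.
\item Differentiating $\Psi(\cdot,r(\cdot))$ once more reproduces the four terms of condition 4 with the correct signs.
\end{enumerate}

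The Jacobian convention you fix is the right one. It is the only reading under which $\nabla_y(G_{xy}G_{yy}^{-1}\nabla_y f)\,G_{yy}^{-1}G_{yx}$ is dimensionally consistent when $m\neq n$. You are also right to make the hypothesis $g\in C^3$ explicit: the paper never states it, but condition 4 already presupposes it.

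One by-product is worth recording. Your $\Psi$ is the paper's leader surface $\xi$, so your identity shows that condition 4 reads $\tilde H_x-F_{xy}G_{yy}^{-1}G_{yx}\succ 0$ in the paper's notation $\tilde H_x,F_{xy}=\nabla\xi$. That is, condition 4 concerns the total derivative of $\xi$ along $r$, not the partial Jacobian $\tilde H_x$ alone.

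Finally, rename your implicit-function variable $\xi$, since it clashes with that leader surface.
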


The first-order conditions identify the critical set of possible Stackelberg candidates, whereas the second-order conditions specify which points are locally stable. 
Since computing the reaction function \( r(x) \) explicitly is typically intractable, practical algorithms must enforce these conditions indirectly, motivating the control-theoretic framework developed in subsequent sections.

\section{Stackelberg Sliding-Mode Algorithm (SSMA)}\label{sec3}

Computing local Stackelberg equilibria in nonlinear sequential games requires 
stabilising the coupled leader--follower dynamics so that the iterates evolve 
consistently along the equilibrium manifold. The Key idea here is inspired by sliding--mode control,  where we treat the first--order optimality conditions in Proposition \ref{cond1} and Proposition \ref{cond2} of both agents as sliding 
manifolds (termed \textit{Stackelberg Sliding Manifolds}) that must be reached and preserved if and only if the second--order optimality conditions are satisfied to guarantee local equilibrium behaviour. The SSMA develops a stabilised dynamical system that drives the trajectories toward these manifolds in finite time while maintaining invariance on the follower's implicit response surface. To this end, let the follower's Stackelberg sliding surface  defined as
 \begin{equation}
    \zeta(x,y) = \nabla_y g(x,y),
\end{equation}
which vanishes precisely when the follower satisfies its local optimality condition. Also, let the leader's Stackelberg sliding surface given by
\begin{equation}
\xi(x,y) 
= \nabla_x f(x,y)
- G_{xy}\, G_{yy}^{-1}\nabla_y f(x,y),
\end{equation}
and encodes the leader's stationarity condition restricted to the reaction 
manifold determined by the follower’s best response. Since a local Stackelberg equilibrium must satisfy both \( \zeta(x,y)=0 \) and \( \xi(x,y)=0 \), the SSMA framework constructs update laws that force the system toward this joint manifold and ensure that the dynamics remain invariant once sliding is established. The update laws are given by:
\begin{align}
x_{t+1} &= x_t - \eta_x \xi_t   + \boldsymbol{\sigma}_x(x_t,y_t),\label{updatex} \\
y_{t+1} &= y_t - \eta_y \zeta_t + \boldsymbol{\sigma}_y(x_t,y_t),\label{updatey}
\end{align}

where $\boldsymbol{\sigma}_x$ and $\boldsymbol{\sigma}_y$ are the correction terms given by
\begin{equation}
    \begin{bmatrix}
\boldsymbol{\sigma}_x (x_t,y_t) \\
\boldsymbol{\sigma}_y(x_t,y_t) 
\end{bmatrix}
=
 \mathcal{H}_t^{-1}
\begin{bmatrix}
\eta_x G_{yx}(x_t,y_t) \xi_t - \epsilon_\zeta \operatorname{sgn}(\zeta_t) \\
\eta_y F_{xy}(x_t,y_t) \zeta_t - \epsilon_\xi \operatorname{sgn}(\xi_t)
\end{bmatrix},
\end{equation}
and 
\begin{equation}\label{matrices}
    \mathcal{H}_t 
=\begin{bmatrix}
G_{yx}(x_t,y_t) & G_{yy}(x_t,y_t) \\
\tilde{H}_x(x_t,y_t) & F_{xy}(x_t,y_t)
\end{bmatrix},~\begin{bmatrix}\xi_t \\\zeta_t\end{bmatrix}=\begin{bmatrix}\xi(x_t,y_t)\\\zeta(x_t,y_t)\end{bmatrix},
\end{equation}
and the joint Stackelberg Hessian $\mathcal{H}_t $ of follower and leader are given by
\begin{equation}\label{blocks}
    G_{yx},\, G_{yy} = \nabla \zeta(x_t,y_t), ~
\tilde{H}_x,\, F_{xy} =\nabla \xi(x_t,y_t)
\end{equation}
\begin{table}[!h]
\centering
\caption{Stackelberg Sliding-Mode Algorithm.}
\begin{tabular}{l}
\hline
\textbf{Algorithm I: Stackelberg Sliding Mode Algorithm (SSMA) } \\
\hline \\[-0.7em]
\textbf{Require:} Stepsizes $\eta_x, \eta_y$; switching gains $\epsilon_\zeta, \epsilon_\xi$; iterations $T$. \\[0.2em]
1:\textbf{For} $t = 1,\ldots, T$ \textbf{do} \\[0.2em]
2:\quad The Sliding Manifolds \\
$ \quad ~~\begin{matrix}
    \zeta_t & \leftarrow &\nabla_y g(x_t, y_t) \quad\quad\quad\quad\quad\quad\\
    \xi_t &\leftarrow &\nabla_x f(x_t,y_t) - G_{xy}G_{yy}^{-1} \nabla_y f(x_t,y_t)
\end{matrix}$\\[0.7em]
3:\quad Compute Jacobian blocks of follower and leader: \\\\[-0.4em]~~\quad 
\(
G_{yx},\, G_{yy} = \nabla^2 g(x_t,y_t), ~
\tilde{H}_x,\, F_{xy} =\nabla \xi(x_t,y_t)
\)

\\[-0.5em]
\\
4:\quad Compute the joint Stackelberg Hessian $\mathcal{H}_t $ and its inverse : \\
~~\quad $\mathcal{H}_t 
=\begin{bmatrix}
G_{yx}(x_t,y_t) & G_{yy}(x_t,y_t) \\
\tilde{H}_x(x_t,y_t) & F_{xy}(x_t,y_t)
\end{bmatrix}$

\\[-0.5em]
\\
5:\quad Compute the corrector terms\\~~\quad \textcolor{blue}{$
\begin{bmatrix}
\boldsymbol{\sigma}_x \\[2pt] \boldsymbol{\sigma}_y
\end{bmatrix}
\leftarrow 
\mathcal{H}_t^{-1}
\begin{bmatrix}
\eta_x \, G_{yx}\, \xi_t - \epsilon_\zeta\,\mathrm{sgn}(\zeta_t) \\
\eta_y \, F_{xy}\, \zeta_t - \epsilon_\xi\,\mathrm{sgn}(\xi_t)
\end{bmatrix}$} \\[0.6em]

6:\quad $x_{t+1} \leftarrow x_t - \eta_x \, \xi_t + \textcolor{blue}{\boldsymbol{\sigma}_x}$ \\[0.4em]

7:\quad $y_{t+1} \leftarrow y_t - \eta_y \, \zeta_t \;+\; \textcolor{blue}{\boldsymbol{\sigma}_y}$ \\[1em]

\hline
\label{table:SFRSG}
\end{tabular}
\end{table}

Overall, the SSMA method combines sliding-mode control with bilevel geometry to deliver a robust, stabilised update law for computing local Stackelberg equilibria in general nonlinear settings, avoiding the oscillations, divergence, and degeneracies that affect classical gradient methods.

\section{SSMA Convergence Analysis}\label{sec4}
In the following lemma, we establish the sliding reaching law satisfied by the 
proposed SSMA. This result characterises how the 
Stackelberg sliding manifolds  evolve under the SSMA updates and provides 
the basis for the subsequent convergence analysis.

\begin{lemma}[\color{blue} SSMA Sliding Reaching Law]

Let \(f(x,y)\) and \(g(x,y)\) be twice continuously differentiable functions and  define the sliding manifolds:
\[
\zeta_t = \nabla_y g(x_t,y_t),~
\xi_t = \nabla_x f(x_t,y_t) - G_{xy}G_{yy}^{-1}\nabla_y f(x_t,y_t).
\]

Assume the iterates \((x_t,y_t)\) are updated according to \eqref{updatex}-\eqref{updatey},
Then the updated Stackelberg sliding Manifolds satisfy the recurrence
\begin{equation}
\begin{bmatrix}
\zeta_{t+1}\\
\xi_{t+1}
\end{bmatrix}
=
\mathcal{A}
\begin{bmatrix}
\zeta_t\\
\xi_t
\end{bmatrix}
-
\begin{bmatrix}
\epsilon_\zeta \operatorname{sgn}(\zeta_t)\\
\epsilon_\xi \operatorname{sgn}(\xi_t)
\end{bmatrix}
+
\begin{bmatrix}
\boldsymbol{\nu}_t\\
\boldsymbol{\mu}_t
\end{bmatrix},
\end{equation}
where \( \boldsymbol{\nu}_t,\boldsymbol{\mu}_t \) denote second-order Taylor 
remainder terms arising from the gradient expansion and
\begin{equation}
\mathcal{A}
=
\begin{bmatrix}
I - \eta_y G_{yy} & 0 \\
0 & I - \eta_x \tilde H_x
\end{bmatrix}.
\end{equation}
Also, the matrix blocks $G_{yy}$ and $\tilde H_x$ are given in \eqref{blocks}.\label{lemma_stack}
\end{lemma}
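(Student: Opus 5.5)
The plan is to write a first-order Taylor expansion of the two sliding maps \(\zeta\) and \(\xi\) around the current iterate \((x_t,y_t)\). Then I substitute the SSMA increments from \eqref{updatex}--\eqref{updatey}, and check that the correction terms \(\boldsymbol{\sigma}_x,\boldsymbol{\sigma}_y\) were built precisely so that the off-diagonal couplings cancel. Set \(\Delta x_t = x_{t+1}-x_t = -\eta_x\xi_t+\boldsymbol{\sigma}_x\) and \(\Delta y_t = y_{t+1}-y_t=-\eta_y\zeta_t+\boldsymbol{\sigma}_y\). I implicitly assume what the statement requires to make sense: \(G_{yy}\) is invertible along the iterates, so \(\xi\) is well defined, and \(\mathcal{H}_t\) is invertible, so the correctors are well defined. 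I also assume \(\xi\) is differentiable. Strictly this asks a little more smoothness of \(g\) than \(C^2\), since \(\xi\) contains \(G_{yy}^{-1}\); I would flag this and otherwise take the Jacobian blocks \eqref{blocks} as given.

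First step. By the definition of the blocks in \eqref{blocks}, the expansion reads
\[
\begin{bmatrix}\zeta_{t+1}\\ \xi_{t+1}\end{bmatrix}
=\begin{bmatrix}\zeta_{t}\\ \xi_{t}\end{bmatrix}
+\mathcal{H}_t\begin{bmatrix}\Delta x_t\\ \Delta y_t\end{bmatrix}
+\begin{bmatrix}\boldsymbol{\nu}_t\\ \boldsymbol{\mu}_t\end{bmatrix}.
\]
Here \(\boldsymbol{\nu}_t,\boldsymbol{\mu}_t\) are simply defined as the Taylor remainders, i.e.\ the difference between the exact update and its linearisation. Via the integral form of the mean value theorem they are \(o(\|(\Delta x_t,\Delta y_t)\|)\), and \(O(\|(\Delta x_t,\Delta y_t)\|^2)\) under Lipschitz Jacobians. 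This matches the statement's description as ``second-order Taylor remainder terms.''

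Second step: split \(\mathcal{H}_t[\Delta x_t;\Delta y_t]\) into the gradient part and the corrector part. The gradient part is
\[
\mathcal{H}_t\begin{bmatrix}-\eta_x\xi_t\\-\eta_y\zeta_t\end{bmatrix}
=\begin{bmatrix}-\eta_x G_{yx}\xi_t-\eta_y G_{yy}\zeta_t\\ -\eta_x\tilde H_x\xi_t-\eta_y F_{xy}\zeta_t\end{bmatrix}.
\]
The corrector part is, by construction,
\[
\mathcal{H}_t\mathcal{H}_t^{-1}[\,\cdot\,]=\begin{bmatrix}\eta_x G_{yx}\xi_t-\epsilon_\zeta\operatorname{sgn}(\zeta_t)\\ \eta_y F_{xy}\zeta_t-\epsilon_\xi\operatorname{sgn}(\xi_t)\end{bmatrix}.
\]
Adding the two parts, the cross terms \(\pm\eta_x G_{yx}\xi_t\) and \(\pm\eta_y F_{xy}\zeta_t\) cancel. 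What remains is \(-\eta_y G_{yy}\zeta_t-\epsilon_\zeta\operatorname{sgn}(\zeta_t)\) in the first block and \(-\eta_x\tilde H_x\xi_t-\epsilon_\xi\operatorname{sgn}(\xi_t)\) in the second. Combined with the \([\zeta_t;\xi_t]\) term, this is exactly the block-diagonal matrix \(\mathcal{A}\) acting on \([\zeta_t;\xi_t]\), minus the switching terms, plus the remainders.

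I would also verify dimensional consistency, since it is easy to get wrong. The block \(G_{yx}\) is \(m\times n\) and multiplies \(\xi_t\in\mathbb{R}^n\). The block \(F_{xy}\) is \(n\times m\) and multiplies \(\zeta_t\in\mathbb{R}^m\). Thus \(\mathcal{H}_t\) maps \((\Delta x,\Delta y)\in\mathbb{R}^{n+m}\) to \((\zeta,\xi)\)-space, and the ordering of rows (\(\zeta\) first) must match the ordering of the right-hand vector in \(\boldsymbol{\sigma}\).

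The algebra is routine. The main obstacle is the status of the remainder terms. They depend on \(\boldsymbol{\sigma}\), which contains \(\mathcal{H}_t^{-1}\) and the discontinuous \(\operatorname{sgn}\) terms. So \(\boldsymbol{\nu}_t,\boldsymbol{\mu}_t\) are only small when the step \((\Delta x_t,\Delta y_t)\) is small, which requires \(\eta_x,\eta_y,\epsilon_\zeta,\epsilon_\xi\) small relative to \(\|\mathcal{H}_t^{-1}\|\). For the lemma as stated, I would simply define the remainders exactly, so the identity holds without approximation. I would then record the bound \(\|(\boldsymbol{\nu}_t,\boldsymbol{\mu}_t)\|\le \tfrac{L}{2}\|(\Delta x_t,\Delta y_t)\|^2\) under a Lipschitz assumption on \(\nabla\zeta,\nabla\xi\), for later use in the convergence analysis.
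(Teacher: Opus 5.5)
Your proposal is correct and follows the paper's own proof. Both Taylor-expand $\zeta$ and $\xi$ using the Jacobian blocks of $\mathcal{H}_t$, substitute the SSMA increments, and then use the construction of the corrector (you via $\mathcal{H}_t\mathcal{H}_t^{-1}=I$ in stacked form, the paper row by row) to cancel the cross-couplings $\eta_x G_{yx}\xi_t$ and $\eta_y F_{xy}\zeta_t$. Your caveat is also a valid refinement that the paper's proof glosses over: differentiating $\xi$ requires $g\in C^3$ (and Lipschitz Jacobians for the $O(\|\cdot\|^2)$ remainder bound), rather than the stated $C^2$, and the remainders need not be small because the increments contain the $\operatorname{sgn}$ terms.
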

\begin{proof}
We begin by expanding the sliding variables at the next iteration using Taylor's theorem around \( (x_t,y_t) \):
\begin{align}
\zeta_{t+1} 
&=  \zeta_t + G_{yx}\,\delta x + G_{yy}\,\delta y + \boldsymbol{\nu}_t, 
\label{eq:zeta-taylor} \\
\xi_{t+1} 
&
= \xi_t + \tilde{H}_x\,\delta x + F_{xy}\,\delta y + \boldsymbol{\mu}_t,
\label{eq:xi-taylor}
\end{align}
where
\[
\delta x = x_{t+1}-x_t, 
\quad 
\delta y = y_{t+1}-y_t,
\]
and the remainder terms satisfy
\[
\|\boldsymbol{\nu}_t\|,\;\|\boldsymbol{\mu}_t\|
= \mathcal{O}\!\left(\|(\delta x,\delta y)\|^2\right) 
\]
and 
\begin{equation}
   \lim\limits_{(\delta x,\delta y)\rightarrow (0,0)}\boldsymbol{\nu}_t(\delta x,\delta y)=0,~\lim\limits_{(\delta x,\delta y)\rightarrow (0,0)}\boldsymbol{\mu}_t(\delta x,\delta y)=0.
\end{equation}
From Algorithm~\ref{table:SFRSG}, the update law is
\begin{align}
\delta x = -\eta_x \,\xi_t + \boldsymbol{\sigma}_x, 
~
\delta y = -\eta_y \,\zeta_t + \boldsymbol{\sigma}_y.
\label{eq:deltay-stack}
\end{align}

Substituting \eqref{eq:deltay-stack} into \eqref{eq:zeta-taylor} gives
\begin{align}
\zeta_{t+1}
&= \zeta_t + G_{yx}(-\eta_x \xi_t + \boldsymbol{\sigma}_x)
   + G_{yy}(-\eta_y \zeta_t + \boldsymbol{\sigma}_y)
   + \boldsymbol{\nu}_t \notag\\
&= (I-\eta_y G_{yy})\zeta_t
   - \eta_x G_{yx}\xi_t
   + G_{yx}\boldsymbol{\sigma}_x
   + G_{yy}\boldsymbol{\sigma}_y
   + \boldsymbol{\nu}_t.
\label{eq:zeta-coupled}
\end{align}

Similarly, substituting \eqref{eq:deltay-stack} into \eqref{eq:xi-taylor} yields
\begin{align}
\xi_{t+1}
&= \xi_t + \tilde{H}_x(-\eta_x \xi_t + \boldsymbol{\sigma}_x)
   + F_{xy}(-\eta_y \zeta_t + \boldsymbol{\sigma}_y)
   + \boldsymbol{\mu}_t \notag\\
&= (I-\eta_x \tilde{H}_x)\xi_t
   - \eta_y F_{xy}\zeta_t
   + \tilde{H}_x\boldsymbol{\sigma}_x
   + F_{xy}\boldsymbol{\sigma}_y
   + \boldsymbol{\mu}_t.
\label{eq:xi-coupled}
\end{align}

Now choose the sliding correction as
\begin{equation}
\begin{bmatrix}
\boldsymbol{\sigma}_x\\[2pt]
\boldsymbol{\sigma}_y
\end{bmatrix}
=
\mathcal{H}_t^{-1}
\begin{bmatrix}
\eta_x G_{yx}\xi_t - \epsilon_\zeta\,\mathrm{sgn}(\zeta_t)\\[4pt]
\eta_y F_{xy}\zeta_t - \epsilon_\xi\,\mathrm{sgn}(\xi_t)
\end{bmatrix},
\label{eq:sigma-stack}
\end{equation}
where $\mathcal{H}_t $ is define in \eqref{matrices}. By construction of \eqref{eq:sigma-stack}, its first row gives
\[
G_{yx}\boldsymbol{\sigma}_x + G_{yy}\boldsymbol{\sigma}_y
= \eta_x G_{yx}\xi_t - \epsilon_\zeta\,\mathrm{sgn}(\zeta_t),
\]
which exactly cancels the mixed coupling term
\(
-\eta_x G_{yx}\xi_t
\)
in \eqref{eq:zeta-coupled}. Hence,
\begin{equation}
\zeta_{t+1}
=
(I-\eta_y G_{yy})\zeta_t
-\epsilon_\zeta\,\mathrm{sgn}(\zeta_t)
+\boldsymbol{\nu}_t.
\label{eq:zeta-reaching}
\end{equation}

Likewise, the second row of \eqref{eq:sigma-stack} yields
\[
\tilde{H}_x\boldsymbol{\sigma}_x + F_{xy}\boldsymbol{\sigma}_y
= \eta_y F_{xy}\zeta_t - \epsilon_\xi\,\mathrm{sgn}(\xi_t),
\]
which cancels the mixed coupling term
\(
-\eta_y F_{xy}\zeta_t
\)
in \eqref{eq:xi-coupled}. Therefore,
\begin{equation}
\xi_{t+1}
=(I-\eta_x \tilde{H}_x)\xi_t
-\epsilon_\xi\,\mathrm{sgn}(\xi_t)
+\boldsymbol{\mu}_t.
\label{eq:xi-reaching}
\end{equation}

Stacking \eqref{eq:zeta-reaching} and \eqref{eq:xi-reaching}, we obtain the compact recurrence
\begin{equation}
\begin{bmatrix}
\zeta_{t+1}\\[4pt]
\xi_{t+1}
\end{bmatrix}
=
\underbrace{
\begin{bmatrix}
I-\eta_y G_{yy} & 0\\[2pt]
0 & I-\eta_x \tilde{H}_x
\end{bmatrix}
}_{\displaystyle \mathcal{A}}
\begin{bmatrix}
\zeta_t\\[4pt]
\xi_t
\end{bmatrix}
-
\begin{bmatrix}
\epsilon_\zeta\,\mathrm{sgn}(\zeta_t)\\[4pt]
\epsilon_\xi\,\mathrm{sgn}(\xi_t)
\end{bmatrix}
+
\begin{bmatrix}
\boldsymbol{\nu}_t\\[4pt]
\boldsymbol{\mu}_t
\end{bmatrix}.
\label{eq:stack-reaching-final}
\end{equation}

\noindent
This shows that, under the proposed sliding correction, the evolution of the 
Stackelberg sliding variables follows a decoupled first--order linear recurrence  with a discontinuous reaching term and higher--order perturbations \cite{cdc22}. Consequently, 
the dynamics implement a discrete--time sliding mode on the manifold
\[
\zeta_t = 0, 
\quad 
\xi_t = 0,
\]
which corresponds to the first--order optimality conditions of a local 
Stackelberg equilibrium. Convergence of the dynamics toward this manifold requires the stability of the 
linear part of the recurrence. In particular, the spectral radius of the matrix 
\(\mathcal{A}\) must satisfy
\[
\rho(\mathcal{A}) < 1.
\]
This condition ensures that the homogeneous component of the system is 
contractive and that the sliding variables asymptotically approach the manifold. 
Since
\[
\mathcal{A} =
\begin{bmatrix}
I - \eta_y G_{yy} & 0 \\
0 & I - \eta_x \tilde{H}_x
\end{bmatrix},
\]
the requirement \(\rho(\mathcal{A}) < 1\) implies that the 
matrices \(G_{yy}\) and \(\tilde{H}_x\) are positive definite. These conditions 
coincide precisely with the second--order optimality conditions of a local 
Stackelberg equilibrium in Definition \ref{def1}, namely that the follower objective is locally convex 
in \(y\) and that the reduced Hessian of the leader objective along the reaction 
manifold is positive definite.
\end{proof}

In classical sliding--mode implementations, the discontinuous 
$\mathrm{sgn}(\cdot)$ function often induces high--frequency oscillations, 
commonly referred to as \emph{chattering}, due to rapid switching in the 
vicinity of the sliding manifold. To alleviate this undesirable effect, the 
discontinuous $\mathrm{sgn}(\cdot)$ term is replaced by a continuous saturation 
function $\mathrm{sat}(\cdot)$ defined within a thin boundary layer \cite{cdc12}. 
This regularisation smooths the control action, improves numerical behaviour, 
and preserves the practical reaching properties of the sliding dynamics. Accordingly, the updated sliding variables satisfy the recurrence
\begin{equation}
\begin{bmatrix}
\zeta_{t+1}\\[4pt]
\xi_{t+1}
\end{bmatrix}
=
\mathcal{A}
\begin{bmatrix}
\zeta_t\\[4pt]
\xi_t
\end{bmatrix}
-
\begin{bmatrix}
\epsilon_\zeta\,\mathrm{sat}(\zeta_t)\\[4pt]
\epsilon_\xi\,\mathrm{sat}(\xi_t)
\end{bmatrix}
+
\begin{bmatrix}
\boldsymbol{\nu}_t\\[4pt]
\boldsymbol{\mu}_t
\end{bmatrix}.
\label{eq:stack-reaching-final}
\end{equation}

On the other hand, the matrices $G_{yy}$ and $\tilde{H}_x$ are symmetric, which implies that the Jacobian associated with~\eqref{eq:stack-reaching-final} has real eigenvalues. Consequently, the local dynamics do not exhibit oscillatory behaviour, and no limit cycles arise in the vicinity of the equilibrium. This ensures a monotonic approach toward the equilibrium manifold without rotational dynamics.

Building on the discrete sliding dynamics in Lemma~\ref{lemma_stack}, we now
analyse the convergence of the proposed SSMA algorithm. The next proposition
establishes conditions for finite-time reaching of the Stackelberg sliding
manifold, defined by $\zeta_t=0$ and $\xi_t=0$. The manifold is attractive
when the required second-order curvature conditions hold. We emphasise that
finite-time reaching concerns the sliding manifold, not finite-time convergence
of $(x_t,y_t)$ to the equilibrium; thereafter, the dynamics converge locally
towards the Stackelberg equilibrium.

\vspace{0.5em}

\begin{proposition}[\color{blue}SSMA  Convergence Analysis]
Let \(f\) and \(g\) be twice continuously differentiable functions and  define the sliding manifolds:
\[
\zeta_t = \nabla_y g(x_t,y_t),~
\xi_t = \nabla_x f(x_t,y_t) - G_{xy}G_{yy}^{-1}\nabla_y f(x_t,y_t).
\]
and assume
\[
\ell_{\zeta} I \preceq G_{yy} \preceq L_{\zeta} I, \quad
\ell_{\xi} I \preceq \tilde H_x \preceq L_{\xi} I,
\]
for some $0<\ell_{\xi}\le L_{\xi}$ and $0<\ell_{\zeta}\le L_{\zeta}$.  

Consider the reaching law given in Lemma~\ref{lemma_stack} with bounded 
perturbations $\|\nu_t\|\le\bar\nu$ and $\|\mu_t\|\le\bar\mu$. Define
\begin{equation}
\begin{array}{ll}
\alpha_\zeta:=\max\{|1-\eta_y\ell_{\zeta}|,|1-\eta_yL_{\zeta}|\},\\
\alpha_\xi:=\max\{|1-\eta_x\ell_{\xi}|,|1-\eta_xL_{\xi}|\},
\end{array}
\end{equation}
and suppose
\[
0<\eta_y<\tfrac{2}{L_{\zeta}},\quad
0<\eta_x<\tfrac{2}{L_{\xi}},
\]
and 
\[
\epsilon_\zeta>\alpha_\zeta\rho_\zeta+\bar\nu,\quad
\epsilon_\xi>\alpha_\xi\rho_\xi+\bar\mu .
\]
Define the in-layer contraction factors
\begin{equation}
\begin{array}{ll}
L_\zeta^{\mathrm{in}}
:=\Big\|I-\eta_y G_{yy}-\tfrac{\epsilon_\zeta}{\rho_\zeta}I\Big\|,
~
L_\xi^{\mathrm{in}}
:=\Big\|I-\eta_x \tilde H_x-\tfrac{\epsilon_\xi}{\rho_\xi}I\Big\|,
\end{array}
\end{equation}
and assume $L_\zeta^{\mathrm{in}}<1$ and $L_\xi^{\mathrm{in}}<1$ along the trajectory.  

Then, with
\[
T_\star:=\max\!\left\{
\Big\lceil\tfrac{\|\zeta_0\|-\rho_\zeta}{\epsilon_\zeta-\alpha_\zeta\rho_\zeta-\bar\nu}\Big\rceil_+,
\;
\Big\lceil\tfrac{\|\xi_0\|-\rho_\xi}{\epsilon_\xi-\alpha_\xi\rho_\xi-\bar\mu}\Big\rceil_+
\right\},
\]
the following conclusions hold:
\begin{enumerate}
\item[\emph{(i)}] For all $t\ge T_\star$, $\|\zeta_t\|\le\rho_\zeta$ and $\|\xi_t\|\le\rho_\xi$.
\item[\emph{(ii)}] For all $t\ge T_\star$,
\begin{equation}
\begin{array}{ll}
\|\zeta_t\|
&\le (L_\zeta^{\mathrm{in}})^{t-T_\star}\|\zeta_{T_\star}\|
+\dfrac{\bar\nu}{1-L_\zeta^{\mathrm{in}}},
\\[6pt]
\|\xi_t\|&
\le (L_\xi^{\mathrm{in}})^{t-T_\star}\|\xi_{T_\star}\|
+\dfrac{\bar\mu}{1-L_\xi^{\mathrm{in}}}.
\end{array}
\end{equation}
\end{enumerate}
Consequently,
\begin{equation}
\begin{array}{ll}
\limsup\limits_{t\to\infty}\|\zeta_t\|
\le \max\!\Big\{\rho_\zeta,\tfrac{\bar\nu}{1-L_\zeta^{\mathrm{in}}}\Big\},
\\[6pt]
\limsup\limits_{t\to\infty}\|\xi_t\|
\le \max\!\Big\{\rho_\xi,\tfrac{\bar\mu}{1-L_\xi^{\mathrm{in}}}\Big\}.
\end{array}
\end{equation}
\label{prop_stack_convergence}
\end{proposition}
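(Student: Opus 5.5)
We start from the decoupled reaching law of Lemma~\ref{lemma_stack}, written with the boundary-layer saturation. Componentwise (vector) this reads
\[
\zeta_{t+1}=(I-\eta_y G_{yy})\zeta_t-\epsilon_\zeta\,\mathrm{sat}(\zeta_t)+\boldsymbol{\nu}_t,
\]
and the analogous recurrence holds for $\xi_t$. The two channels are decoupled, so we treat the $\zeta$-channel in full and obtain the $\xi$-channel by the substitutions $G_{yy}\to\tilde H_x$, $\eta_y\to\eta_x$, $\bar\nu\to\bar\mu$. Here we interpret $\mathrm{sat}(\zeta)=\zeta/\rho_\zeta$ inside the layer $\|\zeta\|\le\rho_\zeta$ and $\mathrm{sat}(\zeta)=\zeta/\|\zeta\|$ outside it. This normalized (vector) saturation is what makes the in-layer factor $L_\zeta^{\mathrm{in}}$ in the statement consistent.

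\textbf{Step 1.} Since $G_{yy}$ is symmetric with spectrum in $[\ell_\zeta,L_\zeta]$, we have $\|I-\eta_yG_{yy}\|=\max_{\lambda}|1-\eta_y\lambda|\le\alpha_\zeta$. The step-size condition $\eta_y<2/L_\zeta$ gives $\alpha_\zeta<1$.

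\textbf{Step 2 (reaching phase).} For $\|\zeta_t\|>\rho_\zeta$, the aim is a uniform decrement
\[
\|\zeta_{t+1}\|\le\|\zeta_t\|-(\epsilon_\zeta-\alpha_\zeta\rho_\zeta-\bar\nu).
\]
To get it, split $\zeta_t=\rho_\zeta u+(\|\zeta_t\|-\rho_\zeta)u$ with $u=\zeta_t/\|\zeta_t\|$, and write
\[
\zeta_{t+1}=(I-\eta_yG_{yy})(\|\zeta_t\|-\rho_\zeta)u+\big[(I-\eta_yG_{yy})\rho_\zeta u-\epsilon_\zeta u\big]+\boldsymbol{\nu}_t .
\]
The first term has norm at most $\alpha_\zeta(\|\zeta_t\|-\rho_\zeta)\le\|\zeta_t\|-\rho_\zeta$. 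The key estimate is the bracket, which should be bounded by $\rho_\zeta-(\epsilon_\zeta-\alpha_\zeta\rho_\zeta)$, i.e.\ the switching gain must beat the linear part over the layer radius.

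This is where I expect the main obstacle. The naive bound on $\|(I-\eta_yG_{yy})\rho_\zeta u-\epsilon_\zeta u\|$ is $\|(1-\epsilon_\zeta/\rho_\zeta)I-\eta_yG_{yy}\|\,\rho_\zeta$, which is exactly $L_\zeta^{\mathrm{in}}\rho_\zeta$. So I would first try to use the in-layer assumption $L_\zeta^{\mathrm{in}}<1$ together with $\epsilon_\zeta>\alpha_\zeta\rho_\zeta+\bar\nu$ to show that the per-step decrease is at least $\epsilon_\zeta-\alpha_\zeta\rho_\zeta-\bar\nu>0$. If this fails, the fallback is to take a decrement expressed through the in-layer factor and accept a slightly different constant in $T_\star$.

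\textbf{Step 3 (reaching time).} Telescoping the decrement gives $\|\zeta_t\|\le\rho_\zeta$ after at most
\[
\left\lceil\frac{\|\zeta_0\|-\rho_\zeta}{\epsilon_\zeta-\alpha_\zeta\rho_\zeta-\bar\nu}\right\rceil_+
\]
steps. Taking the maximum over the two channels yields $T_\star$.

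\textbf{Step 4 (invariance of the layer).} We must show that once $\|\zeta_t\|\le\rho_\zeta$, it stays there. Inside the layer the recurrence is linear:
\[
\zeta_{t+1}=\big(I-\eta_yG_{yy}-\tfrac{\epsilon_\zeta}{\rho_\zeta}I\big)\zeta_t+\boldsymbol{\nu}_t ,
\]
so $\|\zeta_{t+1}\|\le L_\zeta^{\mathrm{in}}\|\zeta_t\|+\bar\nu$. To keep this below $\rho_\zeta$ we need $L^{\mathrm{in}}_\zeta\rho_\zeta+\bar\nu\le\rho_\zeta$. If the assumptions do not directly imply this, I would have to argue it from $\epsilon_\zeta>\alpha_\zeta\rho_\zeta+\bar\nu$, or else note it as an implicit requirement. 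Invariance gives conclusion (i).

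\textbf{Step 5 (tracking bound).} Iterating the inequality $\|\zeta_{t+1}\|\le L_\zeta^{\mathrm{in}}\|\zeta_t\|+\bar\nu$ from $T_\star$ and summing the geometric series gives
\[
\|\zeta_t\|\le (L_\zeta^{\mathrm{in}})^{t-T_\star}\|\zeta_{T_\star}\|+\frac{\bar\nu}{1-L_\zeta^{\mathrm{in}}},
\]
which is conclusion (ii). Letting $t\to\infty$ kills the geometric term, so $\limsup_{t\to\infty}\|\zeta_t\|\le\bar\nu/(1-L_\zeta^{\mathrm{in}})$. Combining this with (i) gives the stated $\max\{\rho_\zeta,\bar\nu/(1-L_\zeta^{\mathrm{in}})\}$ bound, and the $\xi$-channel follows identically.
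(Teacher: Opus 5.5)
\textbf{Verdict: genuine gap.} Your outline matches the paper's: a spectral bound, an excess-radius decrement outside the layer, and a linear recursion with a geometric series inside. The two steps you flag, however, are genuine gaps, and neither can be closed from the stated hypotheses.

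\textbf{Step 2 fails.} You need $\rho_\zeta L_\zeta^{\mathrm{in}}\le\rho_\zeta-(\epsilon_\zeta-\alpha_\zeta\rho_\zeta)$. Since $\epsilon_\zeta/\rho_\zeta>\alpha_\zeta\ge 1-\eta_y\lambda$ for every eigenvalue $\lambda$ of $G_{yy}$, all eigenvalues of $I-\eta_yG_{yy}-\tfrac{\epsilon_\zeta}{\rho_\zeta}I$ are negative. Hence $L_\zeta^{\mathrm{in}}=\epsilon_\zeta/\rho_\zeta-1+\eta_y\lambda_{\max}(G_{yy})$, and your requirement becomes $2\epsilon_\zeta/\rho_\zeta\le 2+\alpha_\zeta-\eta_y\lambda_{\max}(G_{yy})$, which is not assumed. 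In fact the target decrement is false. Take $m=1$, $\eta_yG_{yy}\equiv\tfrac12$, $\rho_\zeta=1$, $\bar\nu=0.1$, $\epsilon_\zeta=1.45$. Then $\alpha_\zeta=\tfrac12$, $\epsilon_\zeta>\alpha_\zeta\rho_\zeta+\bar\nu=0.6$, $L_\zeta^{\mathrm{in}}=0.95<1$ and $\Delta_\zeta=0.85$. Yet $\zeta_t=1.01$ with $\nu_t=0$ gives $\zeta_{t+1}=0.505-1.45=-0.945$, while $\|\zeta_t\|-\Delta_\zeta=0.16$.

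\textbf{Step 4 fails, and (i) is false as stated.} Your fallback decrement from the split, $(1-L_\zeta^{\mathrm{in}})\rho_\zeta-\bar\nu$, and your Step 4 both need $L_\zeta^{\mathrm{in}}\rho_\zeta+\bar\nu\le\rho_\zeta$. This is not implied either. In the same example, take $\zeta_0=1$ and $\|\xi_0\|\le\rho_\xi$, so that $T_\star=0$. The perturbation $\nu_0=-0.1$ is admissible, since the proposition only assumes $\|\nu_t\|\le\bar\nu$, and it gives $\zeta_1=-0.95-0.1=-1.05$, so (i) fails. Because your Step 5 iterates the in-layer recursion for all $t\ge T_\star$, your derivation of (ii) rests on the same missing invariance.

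\textbf{The paper does not fill this hole.} Do not borrow its version of Step 2. It writes $\|\zeta_{t+1}\|\le\|A_\zeta\|\|\zeta_t\|-\epsilon_\zeta+\|\nu_t\|$ ``by submultiplicativity'', but the triangle inequality gives $+\epsilon_\zeta$, not $-\epsilon_\zeta$. With $\nu_t=0$ that right-hand side is negative whenever $\rho_\zeta<\|\zeta_t\|<\epsilon_\zeta/\alpha_\zeta$, a nonempty range under the gain condition. The paper then asserts invariance of the layers without proving it.

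\textbf{A possible repair.} A correct version needs extra hypotheses, for instance $\bar\nu\le(1-L_\zeta^{\mathrm{in}})\rho_\zeta$ and $\eta_yL_\zeta\le1$, together with their $\xi$ analogues.
\begin{enumerate}
\item Outside the layer, write $\zeta_{t+1}=(\|\zeta_t\|A_\zeta-\epsilon_\zeta I)u+\nu_t$ and diagonalize.
\item Each eigenvalue $\|\zeta_t\|(1-\eta_y\lambda)-\epsilon_\zeta$ either lies in $[0,\alpha_\zeta\|\zeta_t\|-\epsilon_\zeta]$, or has modulus at most $\epsilon_\zeta-\rho_\zeta(1-\eta_y\lambda_{\max})=\rho_\zeta L_\zeta^{\mathrm{in}}$. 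The second bound uses $1-\eta_y\lambda\ge0$.
\item Hence $\|\zeta_{t+1}\|\le\max\{\|\zeta_t\|-\Delta_\zeta,\rho_\zeta\}$, which gives entrance within $N_\zeta$ steps.
\item The first hypothesis gives invariance in your Step 4, and your Step 5 then goes through.
\end{enumerate}
The condition $\eta_yL_\zeta\le1$ is not cosmetic. Take $\eta_yG_{yy}\equiv1.4$, $\rho_\zeta=1$, $\epsilon_\zeta=0.55$, $\bar\nu=0$, $\zeta_0=1.14$, with $\xi_0$ in its layer. All stated hypotheses hold, including $L_\zeta^{\mathrm{in}}=0.95$, and $T_\star=1$. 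Yet $\zeta_1=-0.456-0.55=-1.006$, so the stated $T_\star$ fails even with no perturbation.
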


\begin{proof} In the following, the matrices $G_{yy}$ and $\tilde H_x$ are evaluated at 
$(x_t,y_t)$ and we omit the time index when no confusion arises.
\paragraph*{\color{blue}A) Out-layer Reaching (Finite-Time Entrance)}
Let
\[
A_\zeta := I-\eta_y G_{yy}, 
\quad 
A_\xi := I-\eta_x \tilde H_x .
\]
Using the spectral bounds
\[
\ell_{\zeta} I \preceq G_{yy} \preceq L_{\zeta} I,
\quad
\ell_{\xi} I \preceq \tilde H_x \preceq L_{\xi} I,
\]
the eigenvalues of $A_\zeta$ lie in 
\[
\{\,1-\eta_y\lambda:\quad\lambda\in[\ell_{\zeta},L_{\zeta}]\,\},
\]
and those of $A_\xi$ lie in
\[
\{\,1-\eta_x\lambda:\quad\lambda\in[\ell_{\xi},L_{\xi}]\,\}.
\]
Hence
\begin{align}
\|A_\zeta\| \le \alpha_\zeta := \max\{|1-\eta_y\ell_{\zeta}|,\ |1-\eta_yL_{\zeta}|\}<1,\\
\|A_\xi\| \le \alpha_\xi := \max\{|1-\eta_x\ell_{\xi}|,\ |1-\eta_xL_{\xi}|\}<1,
\end{align}
whenever
\[
0<\eta_y<\dfrac{2}{L_{\zeta}}, \quad 0<\eta_x<\dfrac{2}{L_{\xi}} .
\]
From Lemma~\ref{lemma_stack}, the recurrence for $\zeta_t$ outside the boundary  layer $\{\|\zeta_t\|\le\rho_\zeta\}$ reads (when $\|\zeta_t\|>\rho_\zeta$)
\[
\zeta_{t+1} = A_\zeta \zeta_t - \epsilon_\zeta 
\frac{\zeta_t}{\|\zeta_t\|} + \nu_t .
\]
Taking norms and using submultiplicativity,
\[
\|\zeta_{t+1}\|
\le
\|A_\zeta\|\,\|\zeta_t\|
-\epsilon_\zeta
+\|\nu_t\|
\le
\alpha_\zeta \|\zeta_t\|
-\epsilon_\zeta
+\bar\nu .
\]
Define the excess radius
\(
r_t := \|\zeta_t\|-\rho_\zeta >0 .
\)
Then
\begin{align*}
r_{t+1}
&=\|\zeta_{t+1}\|-\rho_\zeta\le
\alpha_\zeta(\rho_\zeta+r_t)-\epsilon_\zeta+\bar\nu-\rho_\zeta\\
&=
\alpha_\zeta r_t
-
(\epsilon_\zeta-\alpha_\zeta\rho_\zeta-\bar\nu).
\end{align*}
By hypothesis
\(
\epsilon_\zeta>\alpha_\zeta\rho_\zeta+\bar\nu,
\)
so the decrement term is positive. The scalar recurrence
\[
r_{t+1}\le \alpha_\zeta r_t-\Delta_\zeta
\]
with
\[
\Delta_\zeta := \epsilon_\zeta-\alpha_\zeta\rho_\zeta-\bar\nu
\]
implies finite-time entrance into the layer. Specifically,
\begin{equation}
N_\zeta
=
\Big\lceil
\frac{\|\zeta_0\|-\rho_\zeta}
{\epsilon_\zeta-\alpha_\zeta\rho_\zeta-\bar\nu}
\Big\rceil_+ .    
\end{equation}
An identical argument for $\xi_t$ yields the entrance time
\begin{equation}
    N_\xi
=
\Big\lceil
\frac{\|\xi_0\|-\rho_\xi}
{\epsilon_\xi-\alpha_\xi\rho_\xi-\bar\mu}
\Big\rceil_+ .
\end{equation}
Setting
\begin{equation}
  T_\star := \max\{N_\zeta,N_\xi\}  
\end{equation}
which gives
\[
\|\zeta_t\|\le\rho_\zeta,
\quad
\|\xi_t\|\le\rho_\xi
\]
for all $t\ge T_\star$, which proves item (i).
\paragraph*{\color{blue}B) In-layer Contraction (Chattering-Free Convergence)}
Inside the boundary layers the saturation operates in its linear regime:
\[
\mathrm{sat}(\zeta_t/\rho_\zeta)=\zeta_t/\rho_\zeta,
\quad
\mathrm{sat}(\xi_t/\rho_\xi)=\xi_t/\rho_\xi .
\]
The dynamics become linear perturbations
\begin{align}
\zeta_{t+1}&=F_\zeta\,\zeta_t+\nu_t,
\quad
F_\zeta := A_\zeta-\frac{\epsilon_\zeta}{\rho_\zeta}I,
\\
\xi_{t+1}&=F_\xi\,\xi_t+\mu_t,
\quad
F_\xi := A_\xi-\frac{\epsilon_\xi}{\rho_\xi}I .
\end{align}
By assumption
\[
\|F_\zeta\|=L_\zeta^{\mathrm{in}}<1,
\quad
\|F_\xi\|=L_\xi^{\mathrm{in}}<1.
\]
Unfolding the recurrence from $T_\star$ yields
\begin{align}
\zeta_t
&=
F_\zeta^{t-T_\star}\zeta_{T_\star}
+
\sum_{k=T_\star}^{t-1}F_\zeta^{t-1-k}\nu_k ,
\\
\xi_t
&=
F_\xi^{t-T_\star}\xi_{T_\star}
+
\sum_{k=T_\star}^{t-1}F_\xi^{t-1-k}\mu_k .
\end{align}
Taking norms and using the disturbance bounds,
\begin{align}
\|\zeta_t\|
&\le
(L_\zeta^{\mathrm{in}})^{t-T_\star}\|\zeta_{T_\star}\|
+
\frac{\bar\nu}{1-L_\zeta^{\mathrm{in}}},
\\
\|\xi_t\|
&\le
(L_\xi^{\mathrm{in}})^{t-T_\star}\|\xi_{T_\star}\|
+
\frac{\bar\mu}{1-L_\xi^{\mathrm{in}}}.
\end{align}
Taking $\limsup_{t\to\infty}$ gives
\begin{align}
\limsup_{t\to\infty}\|\zeta_t\|
&\le
\max\!\Big\{\rho_\zeta,\tfrac{\bar\nu}{1-L_\zeta^{\mathrm{in}}}\Big\},
\\
\limsup_{t\to\infty}\|\xi_t\|
&\le
\max\!\Big\{\rho_\xi,\tfrac{\bar\mu}{1-L_\xi^{\mathrm{in}}}\Big\}.
\end{align}
Thus the sliding layers are invariant and the dynamics contract inside them, 
which proves item (ii).
\end{proof}

\section{Validation and Numerical Experiments}\label{sec5}
In this section, we numerically validate the proposed SSMA and investigate its convergence, accuracy, and computational efficiency. The experiments are designed to compare SSMA with both first-order and second-order methods, and in
particular to examine whether the proposed use of curvature information provides an advantage in terms of convergence speed and accuracy. To this end, we consider the nonlinear Stackelberg problem defined by the leader objective
\begin{equation}
f(x,y)=(x-1)^2+0.60xy+\frac{0.30}{2}y^2,
\end{equation}
and the follower objective
\begin{equation}
g(x,y)=(y-h(x))^2+0.10y^4,~h(x)=0.80x-0.20x^3.
\end{equation}
We evaluate the proposed Algorithm SSMA against a broad set of first-order and curvature-aware bilevel optimization methods. The comparison includes nine algorithms: SSMA (proposed), GDA, OGDA, BR-GD, FR, HessianFR, IFT, IFT-Hessian method, and CRN bilevel method.  It is worth noting that the distinction between FR and HessianFR, and between IFT and IFT-Hessian, is not simply the presence or absence of second-order information. FR and IFT already exploit follower curvature through the local best-response geometry, e.g., $r'(x)=-g_{yy}^{-1}g_{yx}$. Their Hessian variants additionally exploit the curvature of the reduced leader objective to construct a Newton/Hessian-type leader update. Thus, FR and IFT are curvature-aware methods, whereas HessianFR and IFT-Hessian employ explicit second-order leader updates.

For a fair comparison, all methods use the same problem, initial
conditions, iteration horizon $T=1000$, numerical safeguards, and
common evaluation metric. The corresponding leader and follower gains are set to $0.0135$. All methods are evaluated using the joint Stackelberg residual
\begin{equation}
R_k=\sqrt{\zeta_k^2+\xi_k^2},
\end{equation}
with success uniformly defined as $R_T<10^{-5}.$

The deterministic experiment starts from $(x_0,y_0)=(-1.5,1.5)$.
Robustness is evaluated using $500$ Monte Carlo trials with independently sampled initial conditions
\begin{equation}\label{a}
x_0,y_0\sim\mathcal{U}[-1.5,1.5],
\end{equation}
where the same sampled initial condition is used for every algorithm in each trial. Performance is evaluated in terms of final residual, success rate, convergence to the prescribed tolerance, and computation time. In particular, speed is measured by the actual computation time required to first reach $R_k<10^{-5}$. Comparison against both first- and second-order baselines therefore evaluates the accuracy and computational efficiency of SSMA while isolating the benefit of its particular use of curvature information.

All simulations were performed in MATLAB R2024b (Update 9) on a
Windows 11 laptop equipped with an Intel Core Ultra 9 185H processor and 32~GB of RAM. Computation times were measured using MATLAB's \texttt{tic}/\texttt{toc} timing functions under the same computational environment for all methods. The time-to-tolerance metric reports the measured computation time required to first attain $R_k<10^{-5}$.

\begin{figure}[t]
    \centering

    \begin{subfigure}[t]{0.5\columnwidth}
        \centering
        \includegraphics[
            width=\linewidth,
            trim=8 5 8 5,
            clip
        ]{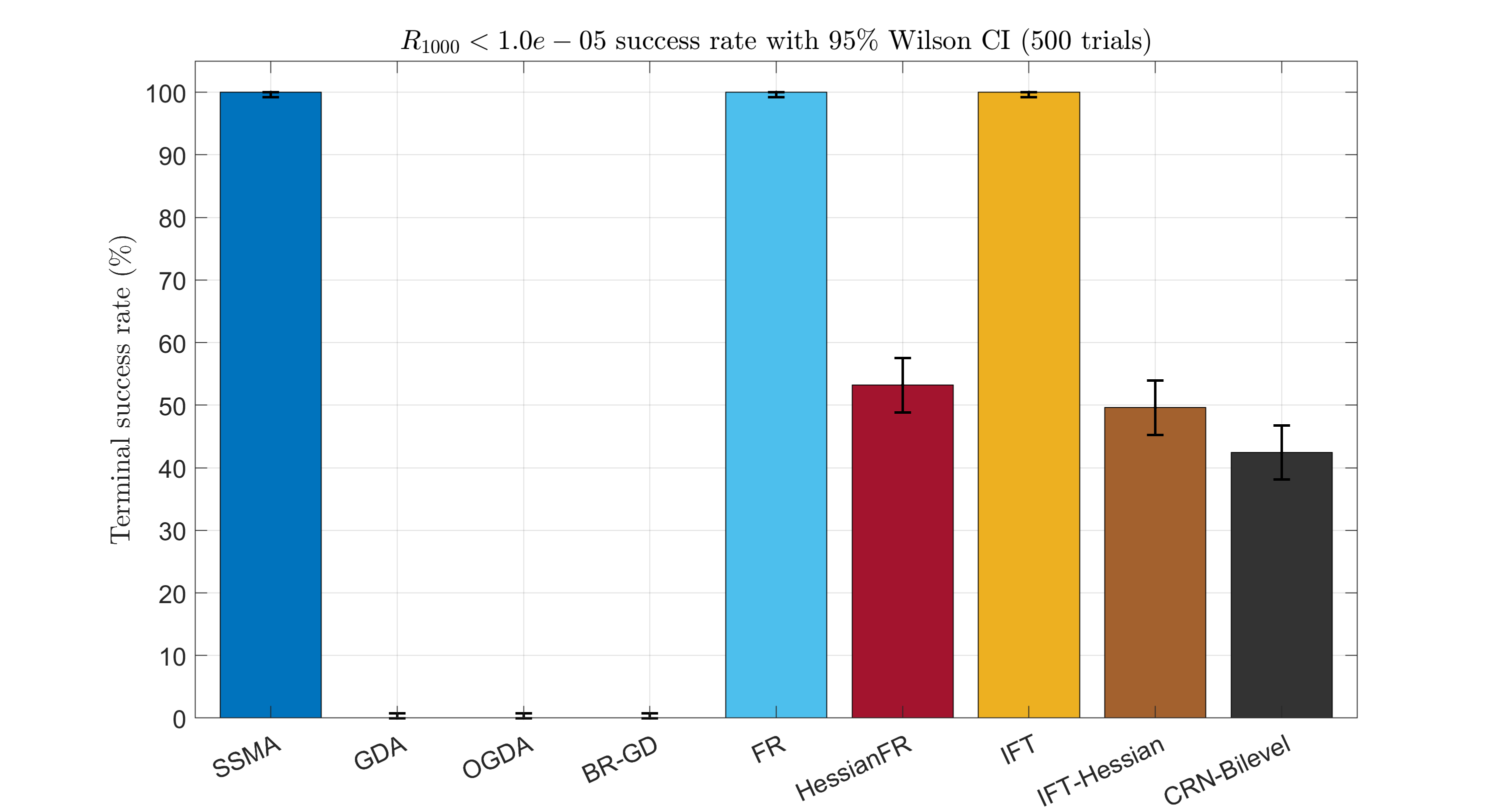}
        \caption{}
        \label{fig:mc_success}
    \end{subfigure}%
    \begin{subfigure}[t]{0.5\columnwidth}
        \centering
        \includegraphics[
            width=\linewidth,
            trim=8 5 8 5,
            clip
        ]{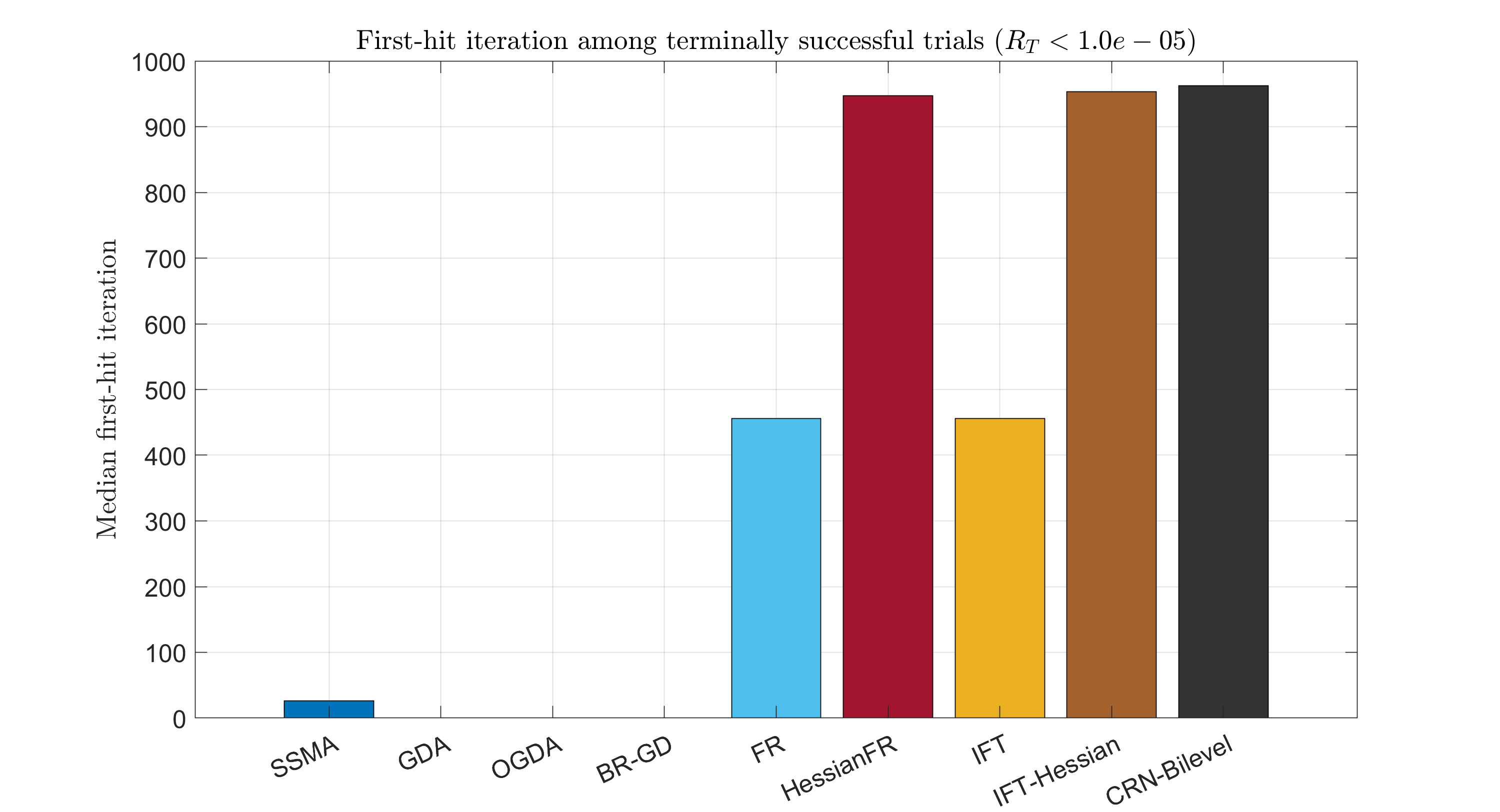}
        \caption{}
        \label{fig:mc_firsthit}
    \end{subfigure}

    \vspace{-3mm}

    \begin{subfigure}[t]{0.5\columnwidth}
        \centering
        \includegraphics[
            width=\linewidth,
            trim=8 5 8 5,
            clip
        ]{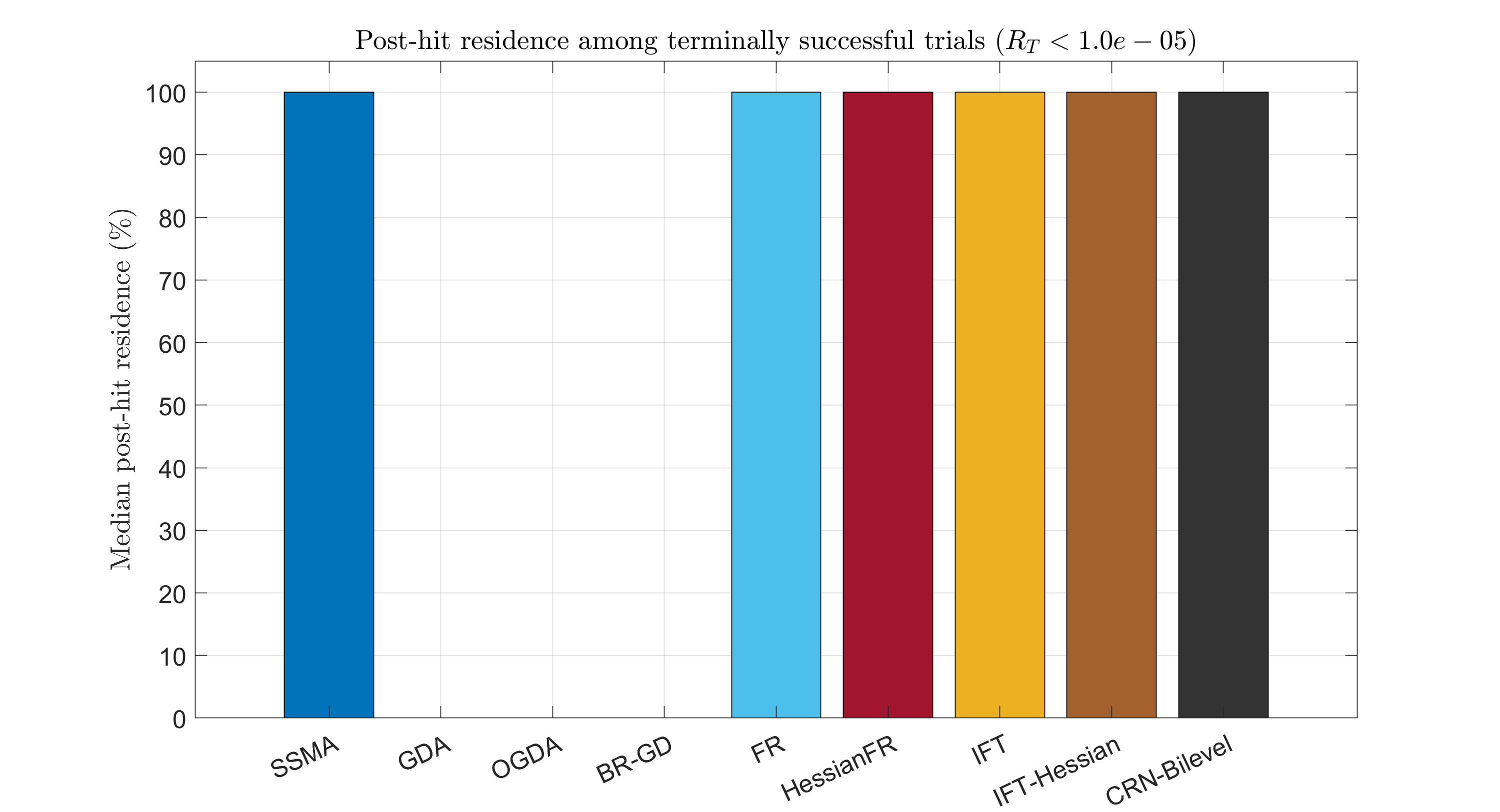}
        \caption{}
        \label{fig:mc_residence}
    \end{subfigure}%
    \begin{subfigure}[t]{0.5\columnwidth}
        \centering
        \includegraphics[
            width=\linewidth,
            trim=8 5 8 5,
            clip
        ]{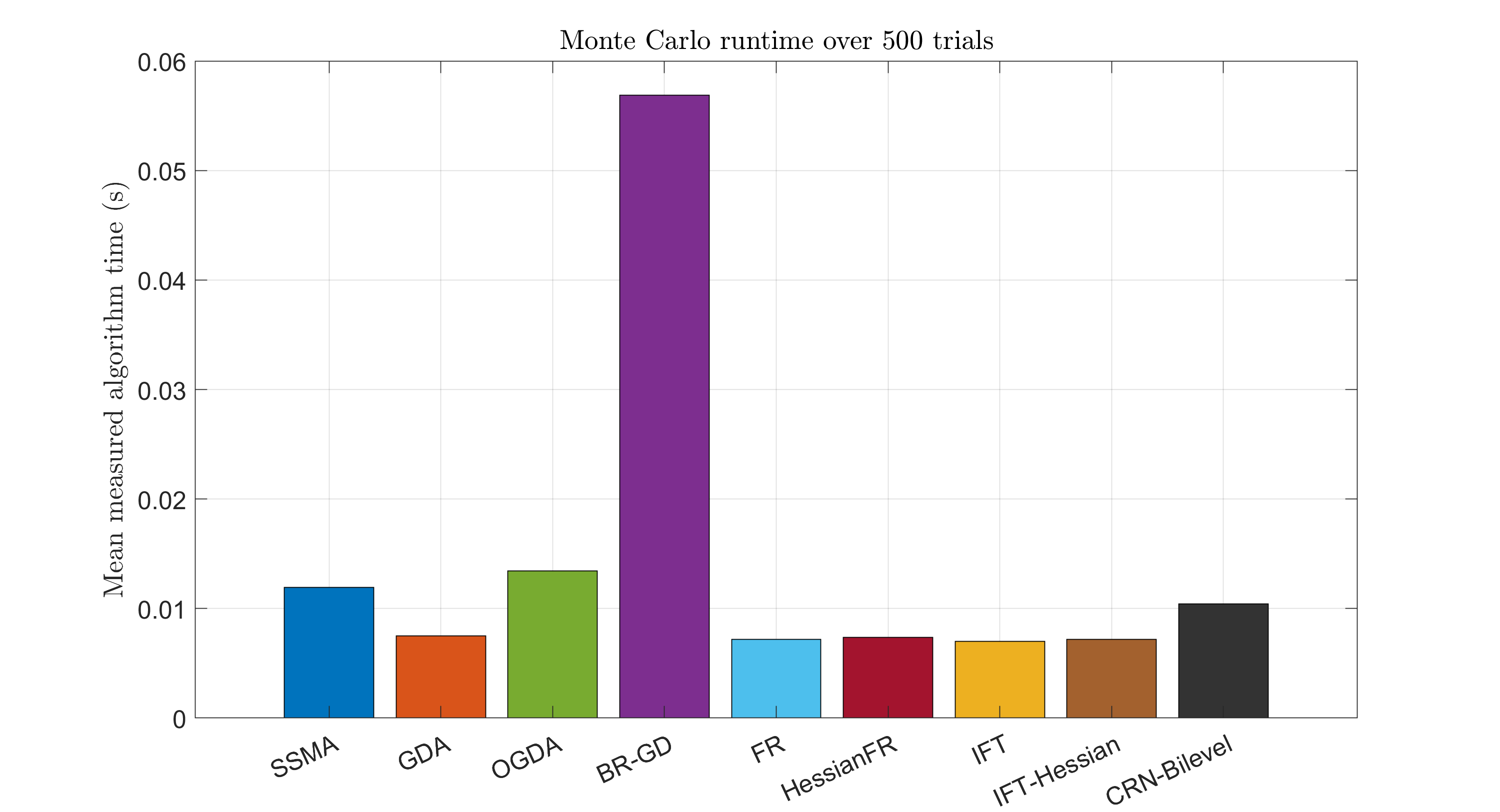}
        \caption{}
        \label{fig:mc_runtime}
    \end{subfigure}

    \vspace{-2mm}

    \caption{Monte Carlo comparison over $500$ trials, with
    $(x_0,y_0)\sim\mathcal{U}(^2)$ and $T=1000$.
    SSMA combines a $100\%$ terminal success rate with the earliest
    median attainment of the prescribed residual tolerance.
    (a) Terminal success rate with $95\%$ Wilson confidence intervals.
    (b) Median first-hit iteration among terminally successful trials.
    (c) Median post-hit residence within $R_k<10^{-5}$.
    (d) Mean measured runtime per trial.}
    \label{fig:mc_summary}
\end{figure}

\begin{figure}[thpb]
    \centering
    \includegraphics[width=\columnwidth]{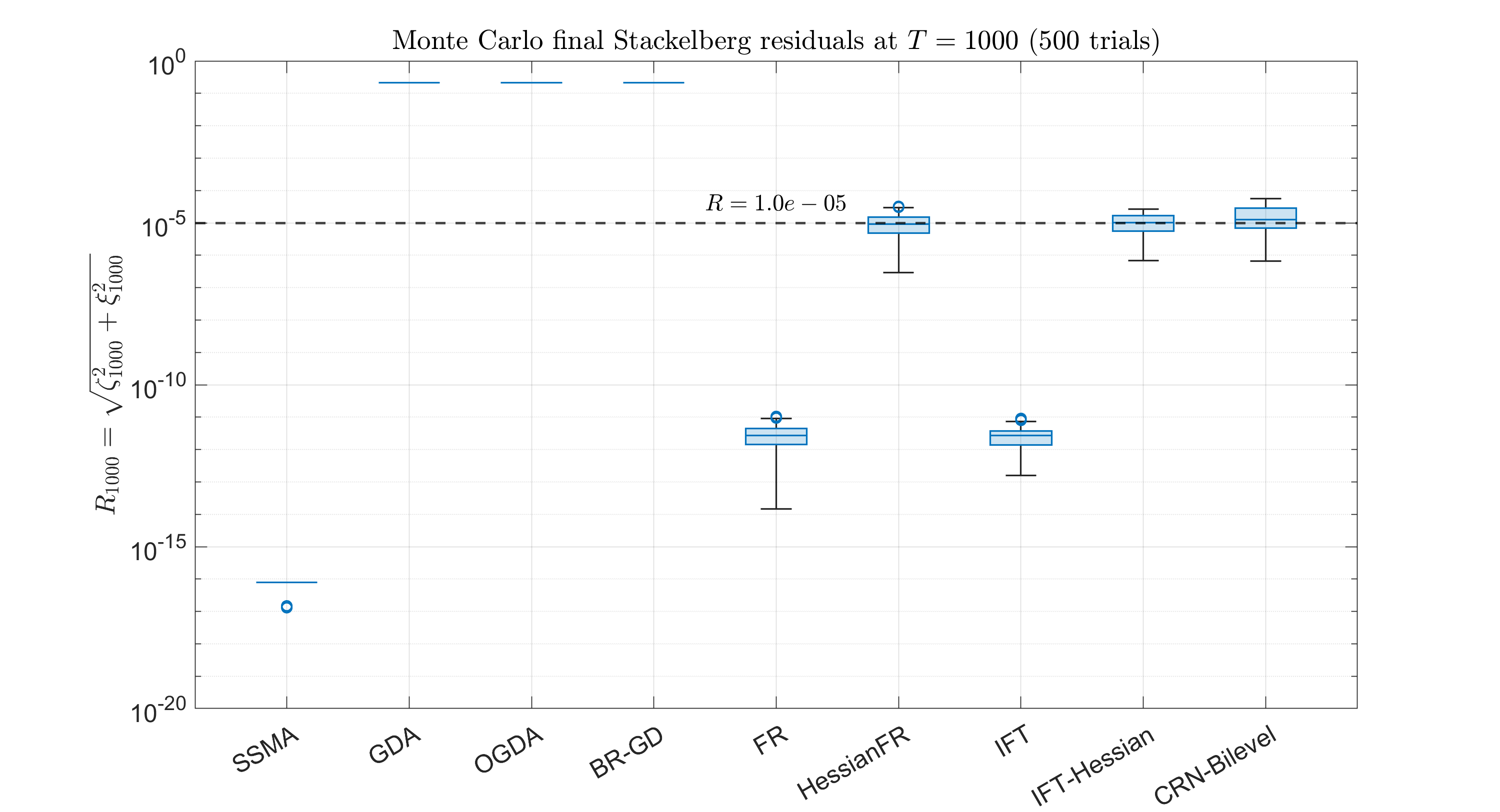}
\caption{Distribution of the terminal Stackelberg residual
$R_{1000}=\sqrt{\zeta_{1000}^{2}+\xi_{1000}^{2}}$ over $500$
Monte Carlo trials, with $(x_0,y_0)\sim\mathcal{U}(^2)$
and $T=1000$. SSMA consistently achieves residuals near numerical
precision, substantially below those of the comparison methods. FR and
IFT also converge below the prescribed tolerance but attain higher
terminal residuals, whereas HessianFR, IFT-Hessian, and CRN-Bilevel remain concentrated near the success boundary. GDA, OGDA, and BR-GD remain well above the required tolerance. The dashed line denotes the terminal success threshold $R_{1000}=10^{-5}$.}
    \label{fig:mc_final_residual}
\end{figure}

\begin{figure}[thpb]
    \centering
    \includegraphics[width=\columnwidth]
    {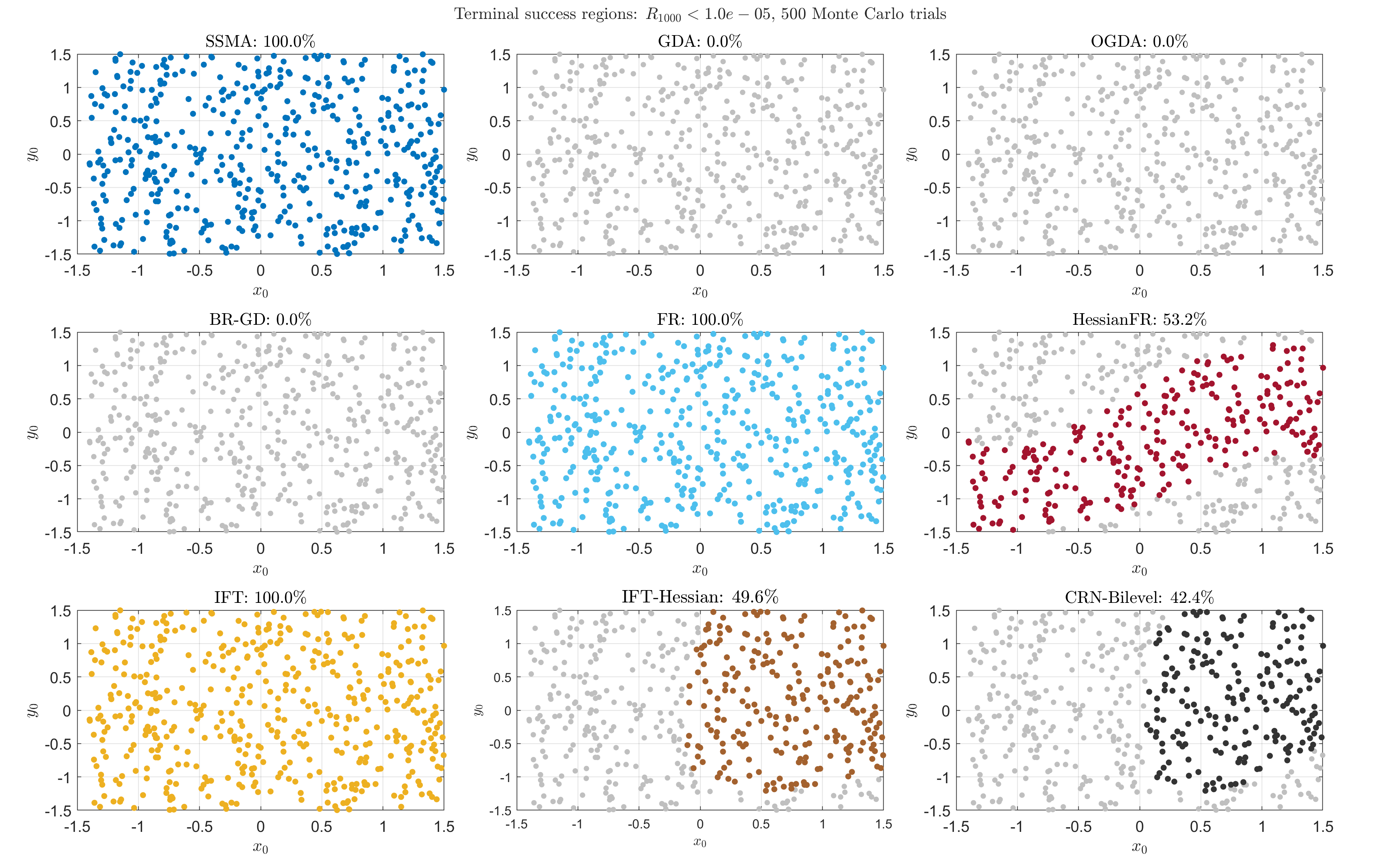}
    \caption{Terminal success regions over $500$ Monte Carlo trials.
    Coloured markers indicate initial conditions satisfying
    $R_{1000}<10^{-5}$, while grey markers indicate unsuccessful trials.}
    \label{fig:mc_success_regions}
\end{figure}
\begin{figure}[thpb]
    \centering
    \includegraphics[width=\columnwidth]{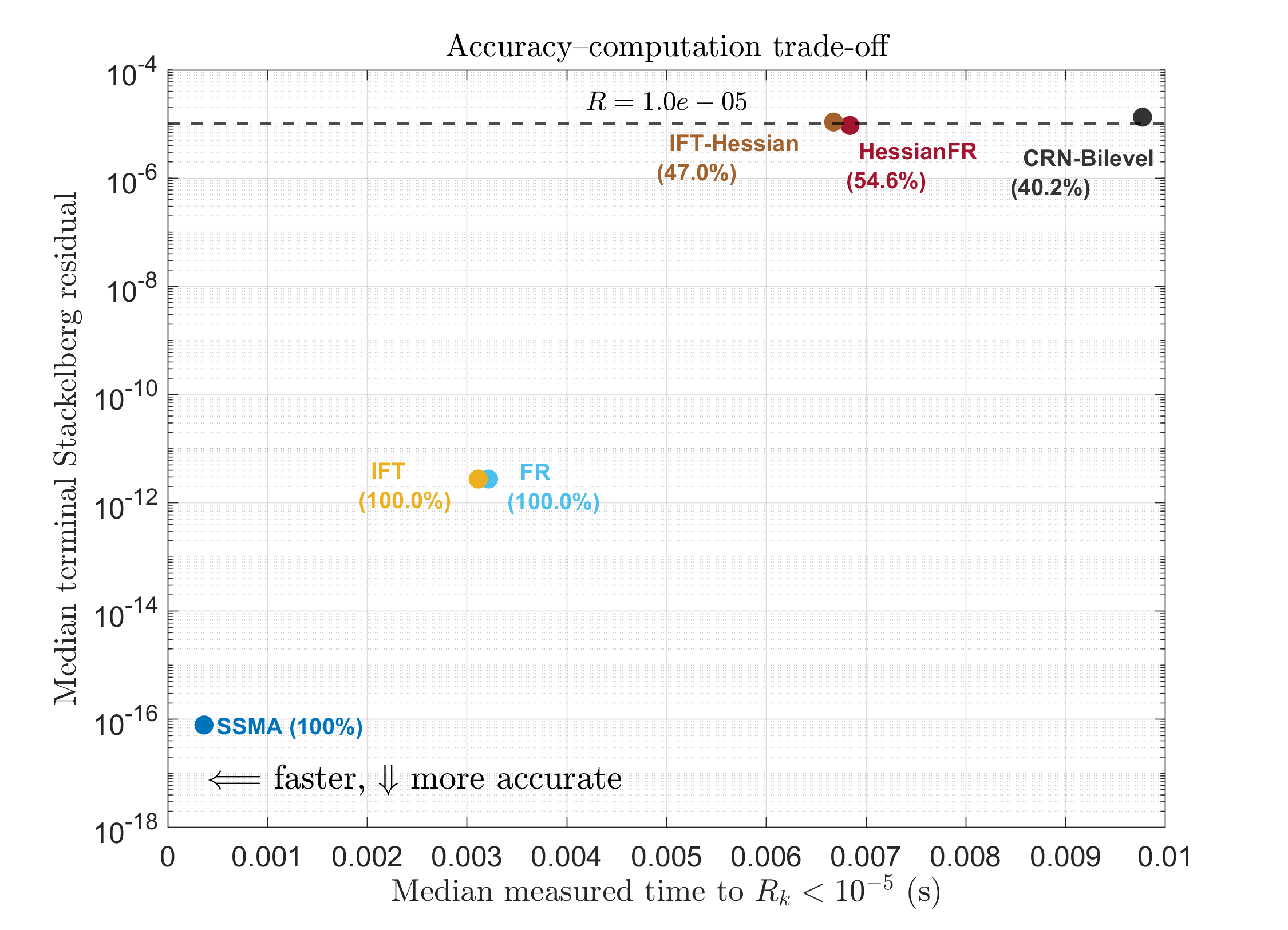}
    \caption{Accuracy--computation trade-off over $500$ Monte Carlo trials,
with $(x_0,y_0)\sim\mathcal{U}([-1.5,1.5]^2)$ and $T=1000$.
The horizontal axis reports the median measured computation time to
first satisfy $R_k<10^{-5}$, computed over successful trials, while
the vertical axis reports the median terminal Stackelberg residual.
Percentages indicate terminal success rates. Lower and farther left
correspond to greater terminal accuracy and faster attainment of the
prescribed tolerance, respectively. SSMA occupies the preferred
lower-left region, attaining near-machine-precision residuals with the shortest measured time to tolerance and a $100\%$ terminal success
rate. Methods that never reach the prescribed tolerance are omitted.}
    \label{fig:accuracy_time}
\end{figure}

Figure~\ref{fig:mc_summary} summarizes the Monte Carlo results over $500$ trials. As shown in Fig.~\ref{fig:mc_summary}-(a), SSMA, FR, and IFT attain a $100\%$ terminal success rate, where success is defined by $R_{1000}<10^{-5}$. HessianFR, IFT-Hessian, and CRN-Bilevel attain success rates of $53.2\%$, $49.6\%$, and $42.4\%$, respectively, while GDA, OGDA, and BR-GD do not satisfy the terminal criterion. The $95\%$ Wilson confidence interval for a $100\%$ success rate over $500$ trials is $[99.2,100]\%$, demonstrating the reliability of SSMA across the sampled initial conditions.

Figure~\ref{fig:mc_summary}-(b) highlights the substantially faster attainment of the prescribed tolerance by SSMA. It reaches
$R_k<10^{-5}$ in a median of only $26$ iterations, approximately
$17.5$ times earlier than FR and IFT, which both require $456$
iterations. HessianFR, IFT-Hessian, and CRN-Bilevel require median
first-hit iterations of $947.5$, $953$, and $962$, respectively,
more than $36$ times that of SSMA. GDA, OGDA, and BR-GD have no
terminally successful trials, and hence their conditional median
first-hit iterations are undefined.

Figure~\ref{fig:mc_summary}-(c) further demonstrates the persistence of convergence. The median post-hit residence is $100\%$ for every method with terminally successful trials, indicating that, for at least half of their successful trials, the residual remains below the prescribed tolerance after its first attainment. Finally, Fig.~\ref{fig:mc_summary}-(d) compares the mean measured runtimes. BR-GD incurs the largest runtime, whereas the remaining methods are substantially faster. Although SSMA does not achieve the lowest raw runtime, it provides the strongest combination of convergence reliability, rapid attainment of the prescribed tolerance, and persistent convergence. Thus, the principal result of Fig.~\ref{fig:mc_summary} is the fast, reliable, and persistent
convergence exhibited by SSMA.

Figure~\ref{fig:mc_final_residual} further reveals a clear separation in terminal equilibrium accuracy. SSMA produces the lowest and most consistent terminal Stackelberg residuals across the $500$ trials, with a median of $7.85\times10^{-17}$, near numerical precision. FR and IFT attain median residuals of $2.73\times10^{-12}$ and $2.70\times10^{-12}$, respectively, approximately four to five orders of magnitude larger than that of SSMA, although both remain well below the prescribed $10^{-5}$ tolerance. In contrast, HessianFR, IFT-Hessian, and CRN-Bilevel remain concentrated near the success threshold, while GDA, OGDA, and BR-GD remain well above it. These results demonstrate that SSMA combines reliable convergence with substantially higher terminal equilibrium accuracy.

Taken together, Figs.~\ref{fig:mc_summary} and
\ref{fig:mc_final_residual} show that SSMA reaches the prescribed
equilibrium tolerance substantially earlier than the competing
successful methods, attains the lowest terminal Stackelberg residual, and maintains a $100\%$ terminal success rate across the sampled initial conditions. Thus, the observed advantage of SSMA is not confined to a single performance metric but reflects the combined improvement in convergence speed, terminal accuracy, and reliability.

The initial-condition maps in Fig.~\ref{fig:mc_success_regions} provide a spatial view of this robustness. SSMA, FR, and IFT succeed throughout the sampled domain. In contrast, HessianFR, IFT-Hessian, and CRN-Bilevel exhibit restricted success regions, showing greater sensitivity to initialisation under the same numerical settings. Finally, iteration count alone does not account for differences in per-iteration computational cost. We therefore measure the actual algorithm computation time required to first satisfy $R_k<10^{-5}$.

Figure~\ref{fig:accuracy_time} summarises the combined accuracy,
computational speed, and reliability of the successful methods. The time-to-tolerance statistic is computed only over trials that
successfully attain $R_k<10^{-5}$ within the $T=1000$ iteration
horizon. SSMA occupies the preferred lower-left region, attaining both the smallest median terminal residual and the shortest measured time to the prescribed tolerance while maintaining a $100\%$ terminal success rate. Compared with FR and IFT, which also achieve $100\%$ terminal success, SSMA reaches the tolerance substantially faster and reduces the median terminal residual by approximately four to five orders of magnitude. HessianFR, IFT-Hessian, and CRN-Bilevel require longer times to reach the tolerance and exhibit substantially lower terminal success rates. GDA, OGDA, and BR-GD are omitted because none of their trials reaches the prescribed tolerance, making their time-to-tolerance
undefined. Thus, the advantage of SSMA is not confined to a single
metric, but is reflected jointly in convergence speed, terminal
accuracy, and reliability.

Table~\ref{tab:mc_results} consolidates the Monte Carlo results across the principal performance measures. SSMA attains a $100\%$ terminal success rate and reaches the prescribed tolerance in a median of only $26$ iterations, compared with $456$ iterations for both FR and IFT. This corresponds to an approximately $94\%$ reduction in the median number of iterations required to attain the tolerance. SSMA also achieves the smallest median terminal residual, $7.85\times10^{-17}$, compared with approximately
$2.7\times10^{-12}$ for FR and IFT. In contrast, HessianFR,
IFT-Hessian, and CRN-Bilevel require nearly the full iteration horizon to reach the tolerance on their successful trials and attain terminal success rates of only $53.2\%$, $49.6\%$, and $42.4\%$, respectively. GDA, OGDA, and BR-GD do not satisfy the terminal criterion in any of the $500$ trials.

\begin{table}[thpb]
\centering
\caption{\textit{Monte Carlo comparison over $500$ trials, with
$(x_0,y_0)\sim\mathcal{U}([-1.5,1.5]^2)$ and $T=1000$.
Here, $P_s$ denotes the terminal success rate, defined by
$R_T<10^{-5}$, and $\mathrm{CI}_{95}$ denotes the corresponding
$95\%$ Wilson confidence interval. The statistics
$\widetilde{R}_T$ and $\overline{R}_T$ denote, respectively, the
median and mean terminal Stackelberg residuals computed over all
$500$ trials, whereas $\widetilde{k}_{\mathrm{hit}}$ denotes the
median first-hit iteration, computed only over trials that attain
$R_k<10^{-5}$ within the prescribed horizon. The LSE column indicates whether the attained solution, upon convergence, satisfies the second-order conditions.}}
\label{tab:mc_results}

\scriptsize
\setlength{\tabcolsep}{2pt}
\renewcommand{\arraystretch}{1.05}

\resizebox{\columnwidth}{!}{%
\begin{tabular}{lcccccc}
\hline
\textbf{Method} &
$\mathbf{P_s}$ (\%) &
$\mathbf{CI_{95}}$ &
$\mathbf{\widetilde{R}_T}$ &
$\mathbf{\overline{R}_T}$ &
$\mathbf{\widetilde{k}_{\mathrm{hit}}}$ &
\textbf{LSE} \\
\hline

SSMA
& 100
& [99.2,100]
& $7.85\times10^{-17}$
& $7.71\times10^{-17}$
& 26
& \cmark \\

GDA
& 0.0
& [0.0,0.8]
& $2.13\times10^{-1}$
& $2.13\times10^{-1}$
& --
& \xmark \\

OGDA
& 0.0
& [0.0,0.8]
& $2.13\times10^{-1}$
& $2.13\times10^{-1}$
& --
& \xmark \\

BR-GD
& 0.0
& [0.0,0.8]
& $2.13\times10^{-1}$
& $2.13\times10^{-1}$
& --
& \xmark \\

FR
& 100
& [99.2,100]
& $2.73\times10^{-12}$
& $3.12\times10^{-12}$
& 456
& \cmark \\

HessianFR
& 53.2
& [48.8,57.5]
& $9.30\times10^{-6}$
& $1.06\times10^{-5}$
& 947.5
& \cmark \\

IFT
& 100
& [99.2,100]
& $2.70\times10^{-12}$
& $2.74\times10^{-12}$
& 456
& \cmark \\

IFT-Hessian
& 49.6
& [45.2,54.0]
& $1.00\times10^{-5}$
& $1.11\times10^{-5}$
& 953
& \cmark \\

CRN-Bilevel
& 42.4
& [38.1,46.8]
& $1.25\times10^{-5}$
& $1.80\times10^{-5}$
& 962
& \cmark \\

\hline
\end{tabular}%
}
\end{table}

Overall, the Monte Carlo experiments demonstrate that SSMA combines
robustness over the sampled initial-condition domain, rapid attainment
of the prescribed Stackelberg tolerance, and high terminal accuracy.
Across $500$ trials, SSMA achieves a $100\%$ terminal success rate,
reaches the prescribed tolerance in a median of only $26$ iterations,
and attains a median terminal residual of $7.85\times10^{-17}$.
The comparison with both gradient-based and curvature-aware baselines
further indicates that these improvements are associated with the
proposed curvature-informed manifold-stabilization mechanism rather
than with access to second-order information alone.

For additional insight into the transient behavior, a representative
single-scenario simulation is visualized in the
\href{https://youtu.be/vToUQnh0WMc}{supplementary video}, showing the
evolution of SSMA and the comparison methods toward the Stackelberg
equilibrium.

\section{Conclusion}\label{sec6}

This paper introduced the Stackelberg Sliding Mode Algorithm (SSMA), a
control-theoretic approach that reformulates the computation of local
Stackelberg equilibria in nonlinear leader--follower games as a
manifold-stabilization problem. SSMA constructs sliding manifolds from
the follower and leader first-order optimality conditions and employs a
discrete-time reaching law to drive the iterates toward these manifolds.
The follower-curvature and leader reduced-curvature conditions determine
manifold attractivity, enabling valid local Stackelberg equilibria to be
selected while stationary points that fail the second-order equilibrium
conditions are rejected. The theoretical analysis establishes
finite-time reaching of the Stackelberg sliding manifolds under the stated assumptions. In $500$ Monte Carlo trials, SSMA was the only evaluated method to simultaneously achieve $100\%$ success, near-machine-precision terminal residuals, rapid convergence, and satisfaction of the second-order conditions for a valid local Stackelberg equilibrium. It reached the prescribed tolerance approximately $17.5$ times earlier than the other fully successful methods while reducing the terminal residual by about four orders of magnitude.

\bibliographystyle{IEEEtran}
\bibliography{ref.bib}

\addtolength{\textheight}{-12cm}   




\end{document}